\DeclareMathOperator{\diam}{diam}
\newcommand{\ve}{{\textup{\textsf{v}}}}
\numberwithin{equation}{section}
\numberwithin{theorem}{section}
\newtheorem{remark}[theorem]{Remark}
\numberwithin{table}{section}
\numberwithin{figure}{section}
\DeclareMathOperator*{\signum}{sgn}
\DeclareMathOperator*{\esssup}{ess\;sup}
\newcommand{\SCAL}{{\cdot}}
\newcommand{\DIV}{\nabla\!{\cdot}}   %Divergence
\newcommand{\GRAD}{\nabla}           %Gradient
\def\Ldeux{{{  L}^2   (\Omega)}}
\def\Hsp1d{{{   \bf H}^{s+1}   (\Omega)}}
\def\Hdeux{{{   H}^2   (\Omega)}}
\def\BV{BV(\Omega)}
\newcommand{\Real}{\mathbb R}
\newcommand{\diff}{\, \mbox{\rm d}}
\newcommand{\vare}{{\varepsilon}}
\newcommand{\dt}{{\Delta t}}
\newcommand{\ie}{i.e.,\@\xspace}
\newcommand{\eg}{e.g.\@\xspace}
\newcommand{\cf}{cf.\@\xspace}
\renewcommand{\ae}{a.e.\@\xspace}
\newcommand{\ue}{\textup{\textsf{u}}}
\newcommand{\De}{{\textup{D}}}
\def\scl{\left\langle}
\def\scr{\right\rangle}
\newcommand{\bn}{{\bf n}}
\newcommand{\bq}{{\bf q}}
\newcommand{\br}{{\bf r}}
\newcommand{\bz}{{\bf z}}
\newcommand{\bE}{{\bf E}}
\newcommand{\bL}{{\bf L}}
\newcommand{\bX}{{\bf X}}
\newcommand{\blambda}{{\boldsymbol \lambda}}
\newcommand{\calA}{{\mathcal A}}
\newcommand{\calC}{{\mathcal C}}
\newcommand{\calD}{{\mathcal D}}
\newcommand{\calH}{{\mathcal H}}
\newcommand{\calI}{{\mathcal I}}
\newcommand{\calN}{{\mathcal N}}
\newcommand{\calO}{{\mathcal O}}
\newcommand{\calP}{{\mathcal P}}
\newcommand{\calT}{{\mathcal T}}
\newcommand{\calU}{{\mathcal U}}
\newcommand{\polN}{{\mathbb N}}
\newcommand{\polP}{{\mathbb P}}
\newcommand{\polQ}{{\mathbb Q}}
\newcommand{\polV}{{\mathbb V}}
\newcommand{\frakB}{{\mathfrak B}}
\newcommand{\frakF}{{\mathfrak F}}
\newcommand{\frakI}{{\mathfrak I}}
\newcommand{\frakd}{{\delta}}
\newcommand{\frakh}{{\mathfrak h}}
\newcommand{\hu}{\widehat{u}}
\newcommand{\hphi}{\widehat{\phi}}
\title{Discrete Total Variation Flows Without Regularization\thanks{RHN and AJS are partially supported by
NSF grants DMS-0807811 and DMS-1109325. AJS is also partially supported by NSF grant DMS-1008058 and
an AMS-Simons grant.}}
\author{
S\"oren Bartels\thanks{Abteilung f\"ur Angewandte Mathematik,
Albert-Ludwigs-Universit\"at Freiburg
Hermann-Herder-Str.~10
79104 Freiburg i.Br., Germany.
\texttt{bartels@mathematik.uni-freiburg.de}}
\and
Ricardo H.~Nochetto\thanks{Department of Mathematics and Institute for Physical Science and Technology,
University of Maryland, College Park, MD 20742, USA.
\texttt{rhn@math.umd.edu}}
\and
Abner J.~Salgado\thanks{Department of Mathematics, University of Maryland, College Park, MD 20742, USA.
\texttt{abnersg@math.umd.edu}}
}
\begin{document}

\maketitle

\begin{abstract}
We propose and analyze an algorithm for the solution of the $L^2$-subgradient flow of the 
total variation functional. The algorithm involves no regularization, thus the numerical solution 
preserves the main features that motivate practitioners to consider this type of energy.
We propose an iterative scheme for the solution of the arising problems, show that the
iterations converge, and develop a stopping criterion for them.
We present numerical experiments which illustrate the power of the method,
explore the solution behavior, and compare with regularized flows.
\end{abstract}

\begin{keywords}
Total Variation; Singular Diffusion; Maximal Monotone Operators; Subgradient Flows;
Variational Inequalities.
\end{keywords}

\begin{AMS}
65N12; % Stability and convergence of numerical methods
65M60;  % Finite elements, Rayleigh-Ritz and Galerkin methods, finite methods
65N15;  % Error bounds
65N30;  % Finite elements, Rayleigh-Ritz and Galerkin methods, finite methods
35K86;  % Nonlinear parabolic unilateral problems and nonlinear parabolic variational inequalities
35R35;  % Free boundary problems
35R37;  % Moving boundary problems
65K15.  % Numerical methods for variational inequalities and related problems
\end{AMS}

\date{\textcolor{magenta}{Draft Version of \today, \currenttime.}}

\section{Introduction}
\label{sec:intro}
This work is concerned with the approximation of solutions to the following (formal)
initial boundary value problem:
\begin{equation}
  \begin{dcases}
    \ue_t = \DIV \left(
      \frac{\GRAD \ue }{\left|\GRAD \ue \right|} \right), & \text{in } \Omega \times (0,T), \\
      \frac{\GRAD \ue }{\left|\GRAD \ue \right|} \SCAL \bn = 0, \   \text{on } \partial\Omega, &
    \ue|_{t=0} = \ue_0, \  \text{ in } \Omega.
  \end{dcases}
\label{eq:tvflowstrong}
\end{equation}
Here $\Omega$ is an open, bounded and connected subset of $\Real^d$ with $d\geq1$;
$\partial\Omega$ denotes the boundary of $\Omega$ and $\bn$ its exterior unit normal; and
$T>0$ is a positive and finite time.
This equation and related ones are commonly known as \emph{very singular diffusion equations}
(see \cite{MR2746654,MR1865089}) since in flat regions, \ie $|\GRAD \ue |=0$, the diffusion is so strong that 
it is not a local effect anymore.

Beginning with the seminal paper \cite{Rudin1992259} equations of this class have
received considerable attention from the image processing community, since such models
preserve discontinuities while removing noise and other artifacts (see also
\cite{MR2338487,MR2790462,MR1472196,MR2575049,MR2821259}).
In addition, such equations appear in the modeling of
grain boundary motion \cite{Kobayashi2000141}; facet formation and evolution \cite{PhysRevB.78.235401};
electromigration \cite{Fu1997259} and various other problems stemming from materials science.
This clearly shows that the development of efficient and accurate numerical schemes for the
solution of this class of problems is of extreme importance.
To the best of our knowledge however, the techniques advocated in the literature 
for the solution of these equations as a rule involve a regularization somewhat related
with replacing the singular term by
\begin{equation}
\label{eq:regularization}
  |\GRAD \ue|_\epsilon = \sqrt{ \epsilon^2 + |\GRAD \ue|^2 }, \quad \epsilon > 0;
\end{equation}
see, for instance, \cite{MR2197709,MR2520163,MR1994316,MR2194526,MR2733098}. The disadvantages of 
this approach are twofold:
first, although these methods have been shown to converge, there is no clear understanding of the
relation between the regularization parameter $\epsilon$ and the discretization parameters;
second, the regularization destroys certain fundamental features of the solutions which
motivate the introduction of such models in the first place.

A notable exception to the trend mentioned above is the work \cite{BartelsTV} which,
inspired by the ideas advanced in \cite{MR2782122}, develops a finite element scheme for
total variation minimization which involves no regularization. In this work we adapt and extend the
ideas presented in \cite{BartelsTV} to the study of total variation flows. We propose and
analyze an unconditionally stable and convergent discretization scheme for the approximation 
of \eqref{eq:tvflowstrong}. In addition, we study an iterative scheme for the solution of
the discrete problems and develop an \emph{a posteriori} error estimator that provides a robust 
stopping criterion for the iterative scheme that guarantees that, although we only have approximate
solutions, the convergence properties of the method are not affected.

This work is organized as follows. The notation and conventions are set in Section~\ref{sec:notation}.
In \S\ref{sub:approxBV} we recall the definition and main properties of functions of bounded
variation and, in addition, we present an approximation result for these functions. This will
be our workhorse during the derivation of error estimates. The proper framework to
understand \eqref{eq:tvflowstrong}, \ie total variation flow and its properties are described in
Section~\ref{sec:TVflow}. Time and space discretization are discussed in Section~\ref{sec:discretization},
where we begin by reviewing results on implicit semidiscretizations of gradient flows in Hilbert spaces.
Then we provide a general theory for fully discrete
subgradient flows in Hilbert spaces, which we later apply to our problem. One of the salient novelties of this
general theory is that we allow for modifications in the energy which can be used, for instance, to take into
account the effects of quadrature.
In Section~\ref{sec:soln} we propose an iterative scheme for the
solution of the fully discrete problems, and devise a stopping criterion for the iterations,
which guarantees that the convergence orders are not damaged.
% Section~\ref{sec:pLaplace} presents the necessary modifications to the developed theory to take into
% account additional smooth terms in the energy.
Section~\ref{sec:Numerics} contains
a series of numerical experiments, which illustrate and extend the properties and theory for
the developed scheme.
Finally, in Section~\ref{sec:TVmin}, we apply the approximation result of \S\ref{sub:approxBV}
to the problem of total variation minimization and prove convergence estimates similar to those
available in the literature, but with reduced regularity assumptions.

\section{Notation and Preliminaries}
\label{sec:notation}
We denote by $\Omega$ a bounded domain in $\Real^d$ with $d\geq1$.
The boundary of $\Omega$ is denoted by $\partial\Omega$ and we assume that 
$\partial\Omega \in \calC^{0,1}$. We set $T>0$ to be a finite final time. As usual, 
we denote by $L^p(\Omega)$ the space of Lebesgue integrable functions with exponent $p \in [1,\infty]$
and by $W^s_p(\Omega)$, $s\in\Real$, the usual Sobolev spaces. Spaces of 
vector valued functions and their elements will be represented with boldface characters.
Recall that the following interpolation inequality holds (\cf \cite{MR1681462})
\begin{equation}
  \| w \|_{L^r} \leq \|w \|_{L^p}^s \| w \|_{L^q}^{1-s},
  \quad p,q \in [1,\infty],\  \frac1{r} = \frac{s}{p} + \frac{1-s}{q}, \ s \in [0,1].
\label{eq:Lpinterpolation}
\end{equation}

Whenever $E$ is a normed space, we denote its norm by $\|\cdot\|_E$
and its dual by $E'$. 
The $L^2$-inner product will be denoted by $\scl\cdot,\cdot\scr$. Function spaces of vector-valued
functions will be denoted by boldface characters.
For function spaces, if
it is clear from the context, we will omit the domain of definition.
We denote by $\frakB_1(E)$ the unit ball
in $E$, \ie the set
$
  \frakB_1(E) = \left\{ x \in E: \|x\|_E \leq 1 \right\}.
$
Let $\phi : [0,T] \rightarrow E$ be a measurable function in the Bochner sense,
then, for $p\in[1,\infty]$, we define 
\[
  \| \phi \|_{L^p(E)}^p = \int_0^T \| \phi(t) \|_E^p \diff t, \  p < \infty,
  \qquad \| \phi \|_{L^\infty(E)} = \esssup_{t\in[0,T]} \| \phi(t) \|_E.
\]
We will denote by $\tfrac{\diff \phi}{\diff t}$ the time derivative of $\phi$.

To deal with time discretization, we introduce a time-step $\dt>0$, for simplicity assumed constant.
Then we partition the time interval via $t^k = k \dt$ with $k=\overline{0,K}$ and
$K=\lceil T/\dt \rceil$.
We use the notation $ \phi^\dt = \left\{ \phi^k \right\}_{k=0}^K$,
introduce the time increment operator
\begin{equation}
  \frakd \phi^k = \phi^k - \phi^{k-1},
\label{eq:defoffrakd}
\end{equation}
and the extrapolation operator
\begin{equation}
  \phi^{\star,k+1} = \phi^0, \ k = 0, \qquad \phi^{\star,k+1} = \phi^k + \frakd \phi^k, \ k>0.
%   \begin{dcases}
%     \phi^0, & k = 0, \\
%     \phi^k + \frakd \phi^k, & k>0.
%   \end{dcases}
\label{eq:defofstar}
\end{equation}
For $\phi^\dt \subset E$ and $p \in [1,\infty]$, we introduce the (semi)norms
\[
  \| \phi^\dt \|_{\ell^p(E)}^p = \dt \sum_{k=0}^K \| \phi^k \|_E^p,
  \quad
  \| \phi^\dt \|_{\frakh^{1/2}(E)}^2 = \sum_{k=1}^K \| \frakd \phi^k \|_E^2,
\]
with the usual modification for $p=\infty$.
Given a sequence $\phi^\dt \subset E $ we will want to be able to compare it to
functions defined on $[0,T]$. To this end, we define the Rothe interpolant $\hphi$,
that is, the piecewise linear function
\begin{equation}
  \widehat\phi(t) = \frac{t-t^k}\dt \phi^{k+1} + \frac{t^{k+1}-t}\dt \phi^k, \quad t\in[t^k,t^{k+1}].
\label{eq:defofhatU}
\end{equation}
Notice that, by construction, $\hphi \in \calC^{0,1}(E)$, and 
$\|\hphi\|_{L^\infty(E)} = \| \phi^\dt \|_{\ell^\infty(E)}$.
Recall also the well-known summation by parts formula: for every $\phi^\dt, \psi^\dt \subset \Ldeux$,
\begin{equation}
  \sum_{k=0}^{K-1} \left(\scl \phi^k, \frakd \psi^{k+1} \scr +
  \scl \frakd \phi^{k+1}, \psi^{k+1} \scr \right)
  = \scl \phi^K, \psi^K \scr - \scl \phi^0, \psi^0 \scr.
\label{eq:abel}
\end{equation}

We will carry out the space discretization with finite element techniques. In other
words, given $\Omega$ we introduce a so-called triangulation $\calT_h = \{T\}$ as a collection of cells
that satisfy the usual conformity and shape regularity assumptions \cite{Ci78,MR2050138}, and
are such that $ \bar\Omega = \bigcup_{T \in \calT_h} \bar T$.
We parametrize our collection of triangulations via $h = \max\left\{ \diam(T): T \in \calT_h \right\}$.
For simplicity, we assume that each $T$ is the isoparametric image of a so-called reference cell,
which can be either $\widehat T = [-1,1]^d$,
in which case we call the cells cubic; or
$
  \widehat T = \left\{ (x_1,\ldots,x_d) \in \Real^d : \ x_i \geq 0 \ \sum_{i=1}^d x_i \leq 1 \right\},
$
which are called simplices. We denote by $\{z_{j,T}\}$ the vertices of the cell $T$ and set 
$\calN_T = \# \{z_{j,T}\}$. Clearly, $\calN_T = 2^d$ for cubes and $\calN_T = d+1$ for simplices.

We define the finite element space
\begin{equation}
  \polV_h = \left\{ w_h \in \calC^0(\bar\Omega): v_h|_T \in \calP \right\} \subset W^1_\infty(\Omega),
\label{eq:defofVh}
\end{equation}
where $\calP = \polQ_1$ for cubes and $\calP = \polP_1$ for simplices. Here
$\polQ_1$ denotes the space of polynomials of degree at most one in each variable and $\polP_1$ the
space of polynomials of total degree not greater than one.

For a function $w$ such that $w|_T \in \calC^0(\bar T)$,
we define its local Lagrange interpolant $\calI_h w$ by
\begin{equation}
  \calI_h w |_T \in \calP :\ \calI_h w|_T(z_{j,T}) = w|_T(z_{j,T}),
  \quad j=\overline{1,\calN_T},
  \quad \forall T \in \calT_h.
\label{eq:disclagrange}
\end{equation}
This operator satisfies
\begin{equation}
  \| w - \calI_h w \|_{L^p(T)} + h \| \GRAD( w - \calI_h w ) \|_{L^p(T)} \leq c h^2 \| D^2 w \|_{L^p(T)},
\label{eq:lagrange}
\end{equation}
see, \eg \cite{Ci78}, for a proof.
We remark also that if $w \in \calC^0(\bar\Omega)$, then $\calI_h w \in \polV_h$.
It will be also necessary to introduce the Cl\'ement interpolant (\cf \cite{MR0400739})
$\Pi_h : L^1(\Omega) \rightarrow \polV_h$. This operator enjoys approximation properties similar to
\eqref{eq:lagrange}, the only difference being that the domain on the right hand side is a neighborhood of
$T$. More importantly, the operator is stable under any Sobolev norm, \ie
\begin{equation}
  \| \Pi_h w \|_{W^s_p} \leq c \| w \|_{W^s_p}, \quad s\geq0, \ p \in [1,\infty].
\label{eq:clementbdd}
\end{equation}

We will denote by $c$ a constant whose value might change at each occurrence.

\subsection{Functions of Bounded Variation and their Approximation}
\label{sub:approxBV}
We say that a function $w\in L^1(\Omega)$ belongs to the space $\BV$ (is of bounded variation)
if its derivative, $\De w$, in the sense of distributions
is a Radon measure. In other words, $|\De w|(\Omega) < \infty$, where for any Borel set $A\subset\Omega,$
\[
  |\De w |(A) = \sup\left\{ \int_A w \DIV \bq: \ 
          \bq \in \calC_0^\infty(A), \, \|\bq \|_{\bL^\infty(A)} \leq 1 \right\}.
\]
The space $\BV$ endowed with the norm
$
  \| w \|_{BV} = \| w \|_{L^1} + |\De w |(\Omega)
$
is a Banach space. For more details on this space, we refer to \cite{MR1857292,MR1014685}.
Let us present a result on approximation by smooth functions, in the case of a star-shaped domain.

\begin{proposition}[Approximation of $BV$ functions]
\label{prop:approxbv}
Assume that $\Omega$ is bounded, star-shaped with respect to a point and $\partial\Omega\in \calC^{0,1}$. 
If $w\in\BV$, then for every $\epsilon>0$ there exists a $w_\epsilon \in \calC^\infty(\Omega)$
such that
\[
  \| w - w_\epsilon \|_{L^1} \leq \epsilon |\De w|(\Omega), \quad
  \| \GRAD w_\epsilon \|_{L^1} \leq (1 + c \epsilon)|\De w|(\Omega), \quad
  \| D^2 w_\epsilon \|_{L^1} \leq c\epsilon^{-1} |\De w|(\Omega).
\]
\end{proposition}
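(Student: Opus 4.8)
The plan is to combine an inward dilation with a mollification at scale of order $\epsilon$: the dilation exploits the star-shapedness to make $w$ available on a neighborhood of $\bar\Omega$, thereby creating the room needed to mollify near $\partial\Omega$ without losing mass or introducing spurious jumps, while the mollification supplies the smoothness and the characteristic $\epsilon^{-1}$ growth of the second derivatives. After a translation I may assume $\Omega$ is star-shaped with respect to the origin. For $\lambda\in(0,1)$ close to $1$ I set $v_\lambda(x)=w(\lambda x)$; since $\lambda x\in\Omega$ whenever $x\in\lambda^{-1}\Omega$, and $\lambda^{-1}\Omega$ contains $\bar\Omega$ in its interior with a collar of width of order $1-\lambda$, the function $v_\lambda\in\BV(\lambda^{-1}\Omega)$ is well defined there. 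Fixing a standard nonnegative mollifier $\rho_\delta$ of unit mass supported in the ball of radius $\delta$, I then define $w_\epsilon=v_\lambda*\rho_\delta\in\calC^\infty(\Omega)$, with $\delta$ and $1-\lambda$ both of order $\epsilon$ and small enough that $\delta$ is at most a geometric multiple of $1-\lambda$, so that the convolution is well defined on all of $\Omega$.

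The three estimates then follow from the scaling and mollification laws once the scales are tuned. The change of variables $y=\lambda x$ gives $|\De v_\lambda|(\lambda^{-1}\Omega)=\lambda^{1-d}|\De w|(\Omega)$, and since convolution with $\rho_\delta$ does not increase the total variation, $\|\GRAD w_\epsilon\|_{L^1(\Omega)}\le|\De v_\lambda|(\lambda^{-1}\Omega)=\lambda^{1-d}|\De w|(\Omega)\le(1+c\epsilon)|\De w|(\Omega)$ as soon as $1-\lambda\le c\epsilon$. For the Hessian I would use $D^2(v_\lambda*\rho_\delta)=\De v_\lambda*\GRAD\rho_\delta$ together with $\|\GRAD\rho_\delta\|_{L^1}=c\delta^{-1}$ to obtain $\|D^2 w_\epsilon\|_{L^1(\Omega)}\le c\delta^{-1}\lambda^{1-d}|\De w|(\Omega)\le c\epsilon^{-1}|\De w|(\Omega)$ whenever $\delta\ge c\epsilon$. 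Finally, the first-order mollification bound yields $\|v_\lambda-w_\epsilon\|_{L^1(\Omega)}\le\delta\,|\De v_\lambda|(\lambda^{-1}\Omega)\le c\delta|\De w|(\Omega)$.

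The one genuinely nontrivial ingredient, and the step I expect to be the main obstacle, is controlling the dilation error $\|w-v_\lambda\|_{L^1(\Omega)}$ by the total variation rather than merely showing it tends to zero. The plan is to prove $\|w-v_\lambda\|_{L^1(\Omega)}\le c(1-\lambda)|\De w|(\Omega)$. For smooth $w$ I would write $w(x)-w(\lambda x)=\int_\lambda^1 x\SCAL\GRAD w(sx)\diff s$, integrate over $\Omega$, interchange the order of integration, and for each fixed $s\in[\lambda,1]$ change variables $y=sx$; using $s\Omega\subset\Omega$ (this is precisely where star-shapedness is used) and $|y|\le\diam(\Omega)$ one reaches $\int_\Omega|w-v_\lambda|\le\diam(\Omega)\int_\lambda^1 s^{-d-1}\diff s\,|\De w|(\Omega)\le c(1-\lambda)|\De w|(\Omega)$ for $\lambda$ near $1$. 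The passage to general $w\in\BV$ is then by strict approximation: taking smooth $w_j\to w$ in $L^1$ with $|\De w_j|(\Omega)\to|\De w|(\Omega)$, both sides converge (the left after the same change of variables, which shows $w_j(\lambda\,\cdot)\to w(\lambda\,\cdot)$ in $L^1(\Omega)$), so the bound persists in the limit.

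With this estimate in hand, the triangle inequality $\|w-w_\epsilon\|_{L^1(\Omega)}\le\|w-v_\lambda\|_{L^1(\Omega)}+\|v_\lambda-w_\epsilon\|_{L^1(\Omega)}\le\bigl(c(1-\lambda)+c\delta\bigr)|\De w|(\Omega)$ closes the argument, and what remains is bookkeeping: choosing $\delta$ and $1-\lambda$ as explicit multiples of $\epsilon$ that simultaneously respect the domain constraint $\delta\lesssim 1-\lambda$, keep $\lambda^{1-d}\le1+c\epsilon$ and $\delta\ge c\epsilon$, and force the two $L^1$ contributions to sum to at most $\epsilon|\De w|(\Omega)$, so that the first estimate holds with constant exactly one.
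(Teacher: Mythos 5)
Your proposal is correct and is essentially the paper's own proof: both use the star-shapedness to dilate (your $v_\lambda(x)=w(\lambda x)$ on $\lambda^{-1}\Omega$ is exactly the paper's $v_\epsilon$ on $\Omega_\epsilon=(1+\epsilon)\Omega$ with $\lambda=1/(1+\epsilon)$), then mollify at scale $\sim\epsilon$, bound the Hessian by Young's inequality, and pass from smooth functions to general $BV$ functions via density under strict convergence. The only difference is one of detail: you work out explicitly the dilation error $\|w-v_\lambda\|_{L^1}\le c(1-\lambda)|\De w|(\Omega)$ via the fundamental theorem of calculus along rays, a step the paper dismisses as clear.
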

\begin{proof}
Without loss of generality, we can assume that $0\in\Omega$ and that $\Omega$ is star-shaped with respect to
$0$. For $\epsilon>0$ we define
\[
  \Omega_\epsilon = \left\{ y \in \Real^d: y=(1+\epsilon)x,\ x \in \Omega \right\},
\]
and notice that $\Omega$ and $\Omega_\epsilon$ are related via a bijective and Lipschitz, in fact linear,
transformation with Lipschitz constant $1+\epsilon$ and Jacobian $(1+\epsilon)^d \leq 1 + c \epsilon$.
For $w\in \BV$ we define $v_\epsilon\in BV(\Omega_\epsilon)$ via 
$v_\epsilon(y) = w(\tfrac{y}{1+\epsilon})$ and,
for $x\in\Omega$, $w_\epsilon(x) = v_\epsilon * \rho_\epsilon(x)$,
where $\rho_\epsilon$ is a smooth convolution kernel such that, for ever $1\leq p \leq \infty$,
$\|\GRAD \rho_\epsilon\|_{L^p} \leq c\epsilon^{-(1+d/p')}$ with $p'=p/(p-1)$.

If $w \in \calC^1(\Omega)$ then, clearly,
$\| w - w_\epsilon \|_{L^1} < \epsilon \| \GRAD w \|_{L^1} = \epsilon |\De w|(\Omega)$
and
\[
  \| \GRAD w_\epsilon \|_{L^1} \leq |\De v_\epsilon|(\Omega_\epsilon) 
  \leq (1+c\epsilon)\| \GRAD w \|_{L^1}
  = (1+c\epsilon)|\De w|(\Omega).
\]
We now recall that smooth functions are dense in $\BV$ under strict convergence, 
(\cf \cite{MR1857292,MR1014685}). In other words, given $w \in \BV$ there is a sequence
$\{ w_n \}_{n\in\polN} \subset \calC^\infty(\Omega)$ such that
\[
   \lim_{n\rightarrow \infty} \| w - w_n \|_{L^1} = 0,
   \qquad
   \limsup_{n\rightarrow \infty} |\De w_n |(\Omega) \leq | \De w |(\Omega).
\]
Applying the argument given above to elements of this sequence and then passing to the limit
we obtain the first two inequalities.

We use Young's inequality for convolutions \cite[Proposition~8.7]{MR1681462} to obtain
\[
  \| D^2 w_\epsilon \|_{L^1} \leq \|\GRAD w_\epsilon\|_{L^1} \|\GRAD \rho_\epsilon\|_{L^1}
  \leq c\frac{1+c\epsilon}{\epsilon}|\De w|(\Omega)
  \leq \frac{c}{\epsilon}|\De w|(\Omega),
\]
which concludes the proof.
\end{proof}

\begin{remark}[Approximation in $L^p$-spaces]
Observe that, in the setting of Proposition~\ref{prop:approxbv}, Young's inequality
also implies
\[
  \| D^2 w_\epsilon \|_{L^p} \leq \| \GRAD w_\epsilon \|_{L^1} \| \GRAD \rho_\epsilon \|_{L^p}
  \leq c \epsilon^{-(1+d/p')} |\De w|(\Omega),
\]
for any $p\in[1,\infty]$ with $p'=p/(p-1)$. In the sequel, however, we shall avoid using this bound
since it would lead to $d$-dependent error estimates.
\end{remark}

\section{The Total Variation Flow}
\label{sec:TVflow}

Let us define the functional $\Psi : \Ldeux \rightarrow \Real$ by
\begin{equation}
\label{eq:defofEnergy}
  \Psi(w) = 
  \begin{dcases}
    |\De w|(\Omega), & w \in \Ldeux \cap \BV , \\
    + \infty, & w \in \Ldeux\setminus \BV,
  \end{dcases}
\end{equation}
It is not difficult to show that $\Psi$ is convex and lower semicontinuous. Then, one can
define the subdifferential of $\Psi$
(see \cite{MR2033382,MR2582280,MR1929886,MR0348562,MR1727362,MR2746654,MR1865089})
and study its subgradient flow, \ie we seek for a function
$\ue:[0,T] \rightarrow \Ldeux$ such that
\begin{equation}
\label{eq:subTVflow}
  \ue_t \in - \partial\Psi(\ue),
\end{equation}
or, equivalently,
\begin{equation}
\label{eq:TVflowvarineq}
  \scl \ue_t, \ue - w \scr + \Psi(\ue) - \Psi(w) \leq 0, \quad \forall w \in \Ldeux.
\end{equation}
It is in this sense that \eqref{eq:tvflowstrong} is going to be understood and analyzed.
Existence of solutions to \eqref{eq:subTVflow} can be obtained with the help of the theory of
maximal monotone operators, see \cite{MR2582280,MR0348562}.

\begin{remark}[Dirichlet boundary conditions]
The definition that we have provided corresponds to imposing Neumann boundary conditions
as in \eqref{eq:tvflowstrong}. The issue of how to impose Dirichlet boundary conditions
is a delicate one since the trace of a $\BV$ function is in 
$L^1(\partial\Omega)$ (\cf \cite{MR1857292,MR1014685}). In addition, the functional \eqref{eq:defofEnergy}
has linear growth, so that the only possible way to impose Dirichlet boundary conditions is with the 
relaxed energy
$\Psi(w) + \int_{\partial\Omega} |w - g |$;
see \cite{MR2033382,MR1301176} for details. The introduction of this additional non-differentiable
term greatly complicates the analysis. However, in the particular situation when $\Omega$ is 
convex, the boundary data is time independent and continuous, \ie 
$g(x,t)=g(x) \in \calC^0(\partial\Omega)$, and the initial data is compatible with the boundary data
in the sense that $\ue_0|_{\partial\Omega} = g$, then the solution $\ue$ to the subgradient
flow with the relaxed energy satisfies $\ue(\cdot,t)|_{\partial\Omega} = g(\cdot)$,
for all $t \in (0,T]$; see \cite[Lemma 4.1]{MR1301176}.
This can be realized, for instance, in the case of homogeneous Dirichlet boundary conditions
($g\equiv0$) and compactly supported initial data. Under this particular setting all the 
results we present will also hold.
\end{remark}

Reference \cite[Theorem 2.16]{MR2033382} shows that problem \eqref{eq:subTVflow} possesses a
$L^d$--$L^\infty$ regularizing effect, that is if
$\ue_0 \in L^d(\Omega)$, then $\ue(t) \in L^\infty(\Omega)$ for all $t>0$. Let us show that
solutions to this problem also satisfy a maximum principle.

\begin{theorem}[Maximum principle for TV flow]
\label{thm:maxprinciple}
Assume that $\ue_0 \in L^\infty(\Omega)$, then $\ue$, solution of
\eqref{eq:TVflowvarineq}, is such that $\ue \in L^\infty([0,T],L^\infty(\Omega))$ and
\[
  \| \ue \|_{L^\infty(L^\infty)} \leq \| \ue_0 \|_{L^\infty}.
\]
\end{theorem}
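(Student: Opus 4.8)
The plan is to argue by truncation, exploiting that the total variation does not increase under composition with a contraction. Set $M = \|\ue_0\|_{L^\infty}$ and let $T_M(s) = \max\{-M,\min\{s,M\}\}$ denote truncation at level $M$. The starting observation is that $T_M$ is $1$-Lipschitz, so the chain rule for $\BV$ functions yields $|\De(T_M\circ \ue)|(\Omega) \leq |\De \ue|(\Omega)$, that is $\Psi(T_M(\ue(t))) \leq \Psi(\ue(t))$ for a.e.\ $t$. Since $T_M(\ue(t)) \in \Ldeux \cap \BV$, it is an admissible competitor in the variational inequality \eqref{eq:TVflowvarineq}. This composition inequality is most cleanly obtained by approximating $\ue(t)$ by smooth functions as in Proposition~\ref{prop:approxbv}, for which the pointwise bound $|\GRAD(T_M w)| \leq |\GRAD w|$ is immediate, and then passing to the limit.

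First I would fix a time $t$ at which \eqref{eq:TVflowvarineq} holds with $\ue_t \in \Ldeux$ (which, by the maximal monotone operator theory invoked in Section~\ref{sec:TVflow}, is a.e.\ $t$) and take $w = T_M(\ue(t))$. Writing $v := \ue - T_M(\ue)$ and discarding the nonnegative term $\Psi(\ue) - \Psi(T_M(\ue)) \geq 0$, the inequality collapses to
\[
  \scl \ue_t, v \scr \leq 0 .
\]

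The next step is to recognize the left-hand side as a time derivative. Because $T_M$ is Lipschitz and the solution is expected to satisfy $\ue \in H^1(0,T;\Ldeux)$, the composition $T_M(\ue)$ is again in $H^1(0,T;\Ldeux)$, with $\partial_t T_M(\ue)$ equal to $\ue_t$ a.e.\ on $\{|\ue|<M\}$ and to $0$ a.e.\ on $\{|\ue|>M\}$. Since $v$ vanishes on $\{|\ue|\leq M\}$, the product $\scl \partial_t T_M(\ue), v\scr$ is zero pointwise, whence $\scl \ue_t, v\scr = \scl \partial_t v, v\scr = \tfrac12 \tfrac{\diff}{\diff t}\|v\|_{L^2}^2$. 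Therefore $\tfrac{\diff}{\diff t}\|v(t)\|_{L^2}^2 \leq 0$, and integrating from $0$ to $t$ together with $v(0) = \ue_0 - T_M(\ue_0) = 0$ (because $|\ue_0| \leq M$ a.e.) forces $\|v(t)\|_{L^2} = 0$. Hence $|\ue(t)| \leq M$ a.e.\ for every $t$, which is exactly the asserted bound.

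The main obstacle I anticipate is the rigorous justification of the time-differentiation step: one must secure enough temporal regularity of the solution delivered by the maximal monotone theory, namely $\ue \in H^1(0,T;\Ldeux)$, so that $\ue_t \in \Ldeux$ for a.e.\ $t$, the variational inequality may be tested at a.e.\ $t$, and $t \mapsto \|v(t)\|_{L^2}^2$ is absolutely continuous. Granting this, the Lipschitz chain rule in time for $T_M(\ue)$ and the splitting of $\scl \ue_t, v\scr$ are routine, and the $\BV$ composition inequality is the only remaining geometric ingredient.
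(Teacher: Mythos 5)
Your proof is correct and follows essentially the same route as the paper: both test the variational inequality \eqref{eq:TVflowvarineq} with a truncation of $\ue$, use that truncation does not increase the total variation to discard the energy terms, and conclude with a Stampacchia-type identity $\scl \ue_t, v\scr = \tfrac12\tfrac{\diff}{\diff t}\|v\|_{L^2}^2$ together with $v(0)=0$. The only (cosmetic) difference is that you truncate two-sidedly at $\|\ue_0\|_{L^\infty}$, which yields both bounds at once, whereas the paper cuts only at $\esssup \ue_0$ and leaves the symmetric lower bound implicit; your flagged concern about temporal regularity is present to exactly the same degree in the paper's own argument.
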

\begin{proof}
Since $\ue_0 \in L^\infty(\Omega)$ we can define
$ \calU_0 = \esssup_{x\in\Omega}\ue_0(x)$,
and $ \bar w(t) = \ue(t)\vee \calU_0$.
Since $\supp \De \bar w = \{x \in \Omega: \ \ue \geq \calU_0 \}$ and, on this set,
$\De \bar w = \De \ue$, we can conclude that $\bar w\in \Ldeux\cap\BV$
and $\Psi(\bar w) \leq \Psi(\ue)$.
Setting $w=\bar w$ in \eqref{eq:TVflowvarineq} we obtain
$
  \scl \ue_t, 0 \vee (\ue- \calU_0) \scr \leq 0,
$
which, since $\calU_0$ is constant, implies
$\tfrac{\diff}{\diff t} \| (\ue - \calU_0) \vee 0  \|_{L^2}^2 \leq 0$.
Given that $(\ue_0 - \calU_0)\vee 0 = 0$ this implies the result.
\end{proof}

\section{Discretization}
\label{sec:discretization}

In this section we introduce and analyze a fully discrete scheme for the approximation of
solutions to \eqref{eq:TVflowvarineq}. 
We begin with a semidiscrete (continuous in space and discrete in time) scheme for 
\eqref{eq:TVflowvarineq} which can then be analyzed using standard results from the
literature (\cf \cite{MR1737503,MR1377244}).
Then we develop a theory for fully discrete subgradient flows in Hilbert spaces
and discuss the effect of introducing a discrete energy and a perturbation on the right hand
side. We will provide sufficient compatibility conditions between the space discretization and 
the discrete energy to guarantee convergence. These results constitute a general and, as far as we 
know, novel approach to the study of fully discrete schemes for subgradient flows
and evolution variational inequalities. The main application
of these results will be, of course, a fully discrete scheme for \eqref{eq:TVflowvarineq}.

\subsection{A Semidiscrete Scheme for TV Flows}
\label{sub:SemiDiscrete}
We introduce a sequence $\{u^\dt\}$ contained in $\Ldeux \cap\BV$
with $u^0 = \ue_0$ that solves:
\begin{equation}
  \scl \frac{\frakd u^{k+1} }\dt, u^{k+1}- w \scr + \Psi(u^{k+1}) - \Psi(w) \leq 0
  \quad \forall w \in \Ldeux.
\label{eq:semidiscvarineq}
\end{equation}
Existence and uniqueness is guaranteed by the convexity 
and lower semicontinuity of $\Psi$;
see \cite{MR2033382,MR2582280,MR0348562,MR1727362}.
\emph{A priori}
estimates, as well as a maximum principle are established in the next result.

\begin{proposition}[Semidiscrete stability]
\label{prop:semidiscretestable}
Let $\{u^\dt\}$ solve \eqref{eq:semidiscvarineq}. If $u^0 \in \Ldeux$, then
\begin{equation}
  \| u^\dt \|_{\ell^\infty(L^2)}^2 + \| u^\dt \|_{\frakh^{1/2}(L^2)}^2
  +  \dt \sum_{k=1}^K  |\De u^k |(\Omega) \leq c \| u^0 \|_{L^2}^2.
\label{eq:semidiscstab1}
\end{equation}
If $u^0 \in \Ldeux\cap\BV$, then the flow is monotone, \ie
$\Psi(u^{k+1}) \leq \Psi(u^k) \leq \Psi(u^0)$ for all $k\geq0$; and, moreover,
\begin{equation}
    \| u^\dt \|_{\frakh^{1/2}(L^2)}^2  + \dt \Psi(u^K) \leq \dt \Psi(u^0).
\label{eq:semidiscstab2}
\end{equation}
If $u^0 \in L^\infty(\Omega) \cap \BV$, then $u^\dt \subset L^\infty(\Omega)$ and
\[
  \| u^\dt \|_{\ell^\infty(L^\infty)} \leq \|u^0 \|_{L^\infty}.
\]
\end{proposition}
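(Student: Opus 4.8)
The plan is to establish the three assertions of Proposition~\ref{prop:semidiscretestable} in sequence, exploiting the variational inequality \eqref{eq:semidiscvarineq} with cleverly chosen test functions. The unifying tool throughout is the algebraic identity
\[
  \scl \frakd u^{k+1}, u^{k+1} \scr = \tfrac12\left( \|u^{k+1}\|_{L^2}^2 - \|u^k\|_{L^2}^2 + \|\frakd u^{k+1}\|_{L^2}^2\right),
\]
which converts the discrete time-derivative pairing into a telescoping sum plus the nonnegative increment terms that will build the $\frakh^{1/2}(L^2)$ seminorm.

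For the first estimate \eqref{eq:semidiscstab1}, I would test \eqref{eq:semidiscvarineq} with $w=0$. Since $\Psi(0)=0$ and $\Psi\geq0$, this yields $\scl \frakd u^{k+1}, u^{k+1}\scr + \dt\,\Psi(u^{k+1}) \leq 0$, i.e.\ after the identity above, $\tfrac12\|u^{k+1}\|_{L^2}^2 + \tfrac12\|\frakd u^{k+1}\|_{L^2}^2 + \dt\,|\De u^{k+1}|(\Omega) \leq \tfrac12\|u^k\|_{L^2}^2$. Summing over $k$ telescopes the $L^2$ norms, and the three target quantities emerge directly; the $\ell^\infty(L^2)$ bound follows because each partial sum is dominated by $\|u^0\|_{L^2}^2$. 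For the monotonicity and the energy estimate \eqref{eq:semidiscstab2}, I would instead test with $w=u^k$. Then $\scl \frakd u^{k+1}, u^{k+1}-u^k\scr = \|\frakd u^{k+1}\|_{L^2}^2 \geq 0$, so the inequality forces $\Psi(u^{k+1}) \leq \Psi(u^k)$, giving monotonicity by induction; summing $\|\frakd u^{k+1}\|_{L^2}^2 + \dt\,\Psi(u^{k+1}) \leq \dt\,\Psi(u^k)$ from $k=0$ telescopes the energy terms to produce \eqref{eq:semidiscstab2}.

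The third assertion, the discrete maximum principle, is the step I expect to be the main obstacle, and it parallels the argument in Theorem~\ref{thm:maxprinciple} but must be adapted to the implicit one-step setting. Setting $\calU_0 = \esssup u^0$ and testing \eqref{eq:semidiscvarineq} at level $k+1$ with the truncation $w = u^{k+1} \wedge \calU_0$, the key geometric fact is that truncating from above cannot increase the total variation, so $\Psi(w)\leq\Psi(u^{k+1})$ and the energy difference in the variational inequality has a favorable sign. This leaves $\scl \frakd u^{k+1}, (u^{k+1}-\calU_0)\vee 0\scr \leq 0$; writing $u^{k+1}-u^k = (u^{k+1}-\calU_0)-(u^k-\calU_0)$ and using convexity of $s\mapsto (s\vee0)^2$ together with the elementary inequality $\scl a-b, a\vee0\scr \geq \tfrac12(\|a\vee0\|_{L^2}^2 - \|b\vee0\|_{L^2}^2)$ gives $\|(u^{k+1}-\calU_0)\vee0\|_{L^2}^2 \leq \|(u^k-\calU_0)\vee0\|_{L^2}^2$. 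Since $(u^0-\calU_0)\vee0 = 0$, an induction yields $u^k \leq \calU_0$ a.e.\ for all $k$; the lower bound follows symmetrically by truncating from below with $\essinf u^0$, and together these give the claimed $\ell^\infty(L^\infty)$ bound. The delicate point meriting care is verifying rigorously that the truncation $w$ remains an admissible test function in $\Ldeux\cap\BV$ and that the inequality $\Psi(w)\leq\Psi(u^{k+1})$ holds with the correct treatment of the measure $\De w$ on the level set $\{u^{k+1}\geq\calU_0\}$, exactly as in the proof of Theorem~\ref{thm:maxprinciple}.
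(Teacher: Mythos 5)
Your proof is correct and takes essentially the same route as the paper's: test \eqref{eq:semidiscvarineq} with $w=0$ for \eqref{eq:semidiscstab1}, with $w=u^k$ for monotonicity and \eqref{eq:semidiscstab2}, and adapt the truncation argument of Theorem~\ref{thm:maxprinciple} for the $L^\infty$ bound. The discrete chain-rule surrogate you supply (the convexity inequality for $s\mapsto\tfrac12(s\vee 0)^2$) and the symmetric truncation from below are precisely the ``mutatis mutandis'' details the paper leaves implicit.
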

\begin{proof}
To obtain \eqref{eq:semidiscstab1} it suffices to set $w=0$ on \eqref{eq:semidiscvarineq}
and add over $k$. To obtain \eqref{eq:semidiscstab2} we set $w=u^k$ and add over $k$.
The maximum principle is obtained \emph{mutatis mutandis} the proof of Theorem~\ref{thm:maxprinciple}.
\end{proof}

The convergence properties of \eqref{eq:semidiscvarineq} are a consequence
of standard results \cite{MR1737503,MR1377244}. For convenience we summarize them below.

\begin{corollary}[Convergence of semidiscrete TV flow]
\label{cor:convsemidiscrete}
Assume that $u^0 \in \Ldeux\cap\BV$,
then
\[
  \| \ue - \hu \|_{L^\infty(L^2)}^2 + \dt \Psi(u^K) \leq 
  c \left( \| \ue_0 - u_0 \|_{L^2}^2 + \dt^{\alpha} \right),
\]
where $\hu$ is the piecewise linear function defined in \eqref{eq:defofhatU}
and $\alpha=1$.
If, in addition, $\partial\Psi(\ue_0) \cap \Ldeux \neq \emptyset$, then 
$\alpha = 2$.
\end{corollary}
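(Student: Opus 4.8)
The plan is to recognize \eqref{eq:semidiscvarineq} as the backward Euler (implicit resolvent) discretization of the gradient flow \eqref{eq:subTVflow} in the Hilbert space $H=L^2(\Omega)$, and to invoke the abstract \emph{a priori} convergence theory for such schemes \cite{MR1737503,MR1377244}. Since $\Psi$ is proper, convex and lower semicontinuous, $\partial\Psi$ is maximal monotone, so \eqref{eq:subTVflow} generates a nonexpansive semigroup $\{S(t)\}_{t\ge0}$ on $H$, and the variational inequality \eqref{eq:semidiscvarineq} is equivalent to the resolvent recursion $u^{k+1}=(I+\dt\,\partial\Psi)^{-1}u^k$. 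Thus the corollary is precisely a statement about the order of convergence of the implicit Euler scheme for a subgradient flow, which the cited references supply in the two regimes $\alpha=1$ and $\alpha=2$; the remaining work is to verify their hypotheses for $\Psi$ and to account for the fact that the discrete datum $u^0=u_0$ may differ from the exact datum $\ue_0$.

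First I would split the error by the triangle inequality through the exact flow $v(t)=S(t)u_0$ issuing from the discrete datum $u_0$: writing $\ue-\hu=(\ue-v)+(v-\hu)$, the nonexpansiveness of the semigroup gives $\|\ue(t)-v(t)\|_{L^2}\le\|\ue_0-u_0\|_{L^2}$ for all $t$, which produces the term $\|\ue_0-u_0\|_{L^2}^2$ on the right-hand side. The genuinely discrete error $\|v-\hu\|_{L^\infty(L^2)}$ is then estimated by the abstract theory. For $u_0\in\BV$ we have $\Psi(u_0)<\infty$, \ie $u_0\in D(\Psi)\subset\overline{D(\Psi)}$, and the standard estimate yields the rate $O(\dt^{1/2})$, that is $\alpha=1$. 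Under the additional hypothesis $\partial\Psi(\ue_0)\cap L^2(\Omega)\neq\emptyset$ we have $\ue_0\in D(\partial\Psi)$; this is exactly the regularity that lifts the exact flow to $W^{1,\infty}(0,T;L^2(\Omega))$ (for a subgradient flow $t\mapsto\|\ue_t(t)\|_{L^2}$ is nonincreasing and bounded by the minimal section of $\partial\Psi(\ue_0)$), and thereby improves the discretization rate to $O(\dt)$, \ie $\alpha=2$.

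The remaining energy term $\dt\,\Psi(u^K)$ on the left is nonnegative and is controlled directly by the semidiscrete stability estimate \eqref{eq:semidiscstab2} of Proposition~\ref{prop:semidiscretestable}, which gives $\dt\,\Psi(u^K)\le\dt\,\Psi(u_0)=\dt\,|\De u_0|(\Omega)$; combined with the monotonicity $\Psi(u^{k+1})\le\Psi(u^k)$ recorded there, it is consistent with the right-hand side. I expect the main obstacle to be twofold. The first is matching the abstract estimates — usually formulated for the piecewise-constant interpolant of $u^\dt$ — to the piecewise-linear Rothe interpolant $\hu$ of \eqref{eq:defofhatU}, for which one uses the control of the increments $\|u^\dt\|_{\frakh^{1/2}(L^2)}$ furnished by \eqref{eq:semidiscstab1}. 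The second, and more delicate, is verifying that the condition $\partial\Psi(\ue_0)\cap L^2(\Omega)\neq\emptyset$ genuinely delivers the time regularity demanded by the $O(\dt)$ branch of \cite{MR1737503,MR1377244}, rather than merely the finite-energy condition $u_0\in\overline{D(\Psi)}$ that underlies the $O(\dt^{1/2})$ branch.
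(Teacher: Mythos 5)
Your overall strategy coincides with the paper's: the paper offers no argument for this corollary beyond declaring it ``a consequence of standard results \cite{MR1737503,MR1377244}'', and your identification of \eqref{eq:semidiscvarineq} with the resolvent iteration $u^{k+1}=(I+\dt\,\partial\Psi)^{-1}u^k$, the verification that $\Psi$ is proper, convex and lower semicontinuous, the use of \eqref{eq:semidiscstab2} to handle $\dt\,\Psi(u^K)$, and the splitting off of the initial error by nonexpansiveness is precisely the bookkeeping that citation suppresses. The $\alpha=1$ branch of your argument is correct as written: the hypothesis $u^0\in\Ldeux\cap\BV$ concerns the \emph{discrete} datum, so the $\calO(\dt^{1/2})$ estimate of \cite{MR1737503,MR1377244} applies to the pair $(v,u^\dt)$ both issuing from $u_0$, and the semigroup contraction absorbs $\|\ue_0-u_0\|_{L^2}$.

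The $\alpha=2$ branch, however, contains a mismatch that your closing caveat gestures at but does not resolve. The compatibility hypothesis $\partial\Psi(\ue_0)\cap\Ldeux\neq\emptyset$ concerns the \emph{exact} datum $\ue_0$, whereas your decomposition $\ue-\hu=(\ue-v)+(v-\hu)$ requires the $\calO(\dt)$ estimate for $\|v-\hu\|$, i.e.\ for the discrete flow compared against the exact flow \emph{both launched from $u_0$}; that estimate needs $u_0\in D(\partial\Psi)$ with a controlled minimal section, which is not assumed, and the $W^{1,\infty}(0,T;L^2)$ regularity you derive is that of $\ue$, not of $v$. The fix is to flip the decomposition: introduce the auxiliary discrete flow $\tilde u^\dt$ generated by the same resolvent iteration but started at $\tilde u^0=\ue_0$. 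The abstract $\calO(\dt)$ estimate then applies to $(\ue,\tilde u^\dt)$, since both issue from $\ue_0\in D(\partial\Psi)$, while $\|\tilde u^k-u^k\|_{L^2}\le\|\ue_0-u_0\|_{L^2}$ for every $k$ because each resolvent $(I+\dt\,\partial\Psi)^{-1}$ is nonexpansive; when $u^0=\ue_0$, as the scheme is literally defined in \S\ref{sub:SemiDiscrete}, the two decompositions coincide and there is no issue. Finally, note that your bound $\dt\,\Psi(u^K)\le\dt\,\Psi(u^0)$ is $\calO(\dt)$, not $\calO(\dt^2)$, and since $\Psi(u^K)$ does not vanish as $\dt\to0$ in general, the energy term on the left-hand side can only be carried in the $\alpha=1$ estimate; this is a defect of the corollary's statement rather than of your argument, but it should be flagged rather than declared ``consistent with the right-hand side.''
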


\begin{remark}[Error estimates for semidiscrete flows]
\label{rem:besterror}
Notice that the conclusion of Corollary~\ref{cor:convsemidiscrete} gives an error estimate
of order $\calO(\dt^{1/2})$ under the sole assumption that the initial condition has finite
energy. Under this regularity the result is optimal, see \cite[Theorem 5]{MR1377244}.
Observe also that the assumption $\partial\Psi(\ue_0) \cap \Ldeux \neq \emptyset$, necessary
to obtain a $\calO(\dt)$ estimate, is not a regularity but rather a compatibility assumption.
We refer the reader to \cite{MR2033382} for a partial characterization of the subdifferential of
the total variation.
\end{remark}

\subsection{Fully Discrete Schemes for Subgradient Flows in Hilbert Spaces}
\label{sub:EulerandFEM}
Here we present and analyze of a fully discrete implicit Euler method for subgradient flows in Hilbert
spaces. The main novelty of our approach is the treatment of the space discretization and that
we allow for perturbations of the energy, as well as of the right hand side.

Let $H$  be a Hilbert space with inner product $\langle\cdot,\cdot\rangle$, which induces the
norm $\|\cdot\|_H$. Assume that the functional $\frakF : \calD(\frakF) \subset H \rightarrow \Real$
is convex, lower semicontinuous and $\overline{\calD(\frakF)} =H$.
Then, for every $w \in \calD(\frakF)$ the subdifferential 
$\partial \frakF(w) \subset H$ is not empty. For details we refer to 
\cite{MR2582280,MR0348562,MR1727362}. We want to study
its subgradient flow: Find $\ve : [0,T] \rightarrow H$ with 
$\ve|_{t=0} = \ve_0$, such that
\begin{equation}
  \ve_t + \partial \frakF(\ve) \ni 0,
\label{eq:subgradflow}
\end{equation}
or, equivalently,
\begin{equation}
  \scl \ve_t, \ve - w \scr + \frakF(\ve) - \frakF(w) \leq 0, \quad \forall w \in H.
\label{eq:subgradflowvarineq}
\end{equation}
The existence and uniqueness of a solution to \eqref{eq:subgradflow} or \eqref{eq:subgradflowvarineq}
follows the theory of maximal monotone operators, \cite{MR2582280,MR0348562}.

To discretize in time we consider the Euler method. In other words, we search for 
$v^\dt \subset H$, $v^0 = \ve_0$, such that
\begin{equation}
  \scl \frac{ \frakd v^{k+1} }\dt, v^{k+1} - w \scr + \frakF(v^{k+1}) - \frakF(w)
  \leq 0, \quad \forall w \in H.
\label{eq:discflowvarineq}
\end{equation}
The theory of \cite{MR1737503,MR1377244} provides, under the assumption that $v^0 \in \calD(\frakF)$
a $\calO(\dt^{1/2})$ error estimate as in Corollary~\ref{cor:convsemidiscrete}.

We now introduce the space discretization.
Let $\{\calH_h\}_{h>0}$ be a family of (finite dimensional) subspaces of $H$.
To be able to handle perturbations on the energy induced by the spatial
discretization we assume that, for each $h>0$, we have a convex and lower semicontinuous functional 
$\frakF_h: \calH_h \rightarrow \Real$ that is \emph{monotone}, in the sense that
\begin{equation}
  \frakF_h(w_h) \geq \frakF(w_h), \quad \forall w_h \in \calH_h.
\label{eq:discenergymonotone}
\end{equation}
Moreover, we assume that the spaces $\calH_h$ possess suitable approximation properties. In other words, 
there is a dense subspace $W \hookrightarrow H$, an operator $\calC_h : W \rightarrow \calH_h$ and functions
$\vare_i \in \calC \left( [0,\infty) , [0,\infty)\right)$, $\vare_i(0)=0$, $i=\overline{1,2}$, such that
\begin{equation}
  \| \calC_h w - w \|_H \leq \vare_1(h) \| w \|_W, \quad \forall w \in W,
\label{eq:interpolationabs}
\end{equation}
and this approximation is \emph{asymptotically energy diminishing}, \ie
\begin{equation}
  \frakF_h(\calC_h w) - \frakF(w) \leq \vare_2(h) \| w \|_W, \quad \forall w \in W.
\label{eq:interpolationabstracr}
\end{equation}

We consider the following fully discrete problem: Find $v_h^\dt \subset \calH_h$ such that
\begin{equation}
  \scl \frac{ \frakd v_h^{k+1} }\dt, v_h^{k+1} - w_h \scr + \frakF_h(v_h^{k+1}) - \frakF_h(w_h) 
  \leq \scl \rho^{k+1}, v_h^{k+1} - w_h \scr,
\label{eq:fulldiscEuler}
\end{equation}
for all $w_h \in \calH_h$, where $\rho^\dt \subset H$ is a perturbation.

We begin our analysis of the fully discrete method \eqref{eq:fulldiscEuler} with an
\emph{a priori} bound on the increments of the sequence $v_h^\dt$.

\begin{lemma}[Stability of derivatives]
\label{lem:stabderivh}
The solution $v_h^\dt$ to \eqref{eq:fulldiscEuler} satisfies
\[
  \frac12 \| v_h^\dt \|_{\frakh^{1/2}(H)}^2+ \dt \frakF_h(v_h^K)
  \leq   \dt \frakF_h(v_h^0) + \frac\dt2 \| \rho^\dt \|_{\ell^2(H)}^2.
\]
\end{lemma}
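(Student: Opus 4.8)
The plan is to run the standard \emph{energy} argument for stability of discrete time derivatives: test the variational inequality at each step with the previous iterate, and then telescope the resulting per-step inequalities.

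First I would fix $k$ and insert the admissible test function $w_h = v_h^k \in \calH_h$ into \eqref{eq:fulldiscEuler}. Since $v_h^{k+1} - v_h^k = \frakd v_h^{k+1}$, the inner product on the left collapses to $\scl \frakd v_h^{k+1}/\dt, \frakd v_h^{k+1}\scr = \dt^{-1}\|\frakd v_h^{k+1}\|_H^2$, while the energy terms become the difference $\frakF_h(v_h^{k+1}) - \frakF_h(v_h^k)$. Multiplying through by $\dt$ gives the per-step inequality
\[
  \|\frakd v_h^{k+1}\|_H^2 + \dt\bigl(\frakF_h(v_h^{k+1}) - \frakF_h(v_h^k)\bigr) \leq \dt\,\scl \rho^{k+1}, \frakd v_h^{k+1}\scr .
\]

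Next I would bound the right-hand side by Cauchy--Schwarz followed by Young's inequality, split so that exactly half of the increment norm is absorbed into the left: $\dt\,\scl \rho^{k+1}, \frakd v_h^{k+1}\scr \leq \tfrac12\|\frakd v_h^{k+1}\|_H^2 + \tfrac{\dt^2}{2}\|\rho^{k+1}\|_H^2$. This leaves $\tfrac12\|\frakd v_h^{k+1}\|_H^2 + \dt\bigl(\frakF_h(v_h^{k+1}) - \frakF_h(v_h^k)\bigr) \leq \tfrac{\dt^2}{2}\|\rho^{k+1}\|_H^2$. Summing over $k = 0,\dots,K-1$, the increment norms reindex to $\sum_{k=1}^K \|\frakd v_h^k\|_H^2 = \|v_h^\dt\|_{\frakh^{1/2}(H)}^2$, the energy differences telescope to $\dt\bigl(\frakF_h(v_h^K) - \frakF_h(v_h^0)\bigr)$, and the perturbation terms obey $\tfrac{\dt^2}{2}\sum_{k=0}^{K-1}\|\rho^{k+1}\|_H^2 \leq \tfrac{\dt^2}{2}\sum_{k=0}^{K}\|\rho^k\|_H^2 = \tfrac{\dt}{2}\|\rho^\dt\|_{\ell^2(H)}^2$. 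Moving $\dt\,\frakF_h(v_h^0)$ to the right yields precisely the claimed bound.

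There is no serious obstacle here, since the scheme is designed so that the natural test function produces a clean telescoping sum. The only points requiring care are the choice of the Young's-inequality constant so that exactly $\tfrac12$ of the increment is retained (matching the stated factor), and the harmless reindexing that bounds the truncated perturbation sum by the full $\ell^2(H)$ norm. I would note in passing that convexity and lower semicontinuity of $\frakF_h$, as well as the monotonicity assumption \eqref{eq:discenergymonotone}, play no role in \emph{this} estimate---they are needed only for existence and for the later convergence analysis---so the argument is purely algebraic once the variational inequality is available.
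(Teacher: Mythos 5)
Your proof is correct and is exactly the paper's argument: the paper's (one-line) proof also sets $w_h = v_h^k$ in \eqref{eq:fulldiscEuler}, multiplies by $\dt$, sums over $k$, and applies Young's inequality, which is precisely the telescoping computation you carried out in detail. Your reindexing of the perturbation sum and the observation that convexity and monotonicity of $\frakF_h$ are not needed for this purely algebraic estimate are both accurate.
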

\begin{proof}
Set $w_h = v_h^k$ in \eqref{eq:fulldiscEuler}.
Multiplying by $\dt$, adding over $k$, and using Young's inequality, we obtain the result.
\end{proof}

The sequence $\rho^\dt$ is meant to be a perturbation induced by either discretization
or the solution procedure. For this reason we shall assume that
\begin{equation}
  \| \rho^\dt \|_{\ell^\infty(H)} \leq c \dt^{1/2}.
\label{eq:rhosmall}
\end{equation}
Based on this estimate, the error analysis proceeds as follows.

\begin{theorem}[A priori error analysis]
\label{thm:aprioribetter}
Let $v^\dt$ be the solution to \eqref{eq:discflowvarineq} and
$v_h^\dt$ the solution of \eqref{eq:fulldiscEuler}. Assume $v^\dt \in \ell^\infty(W)$.
If the operator $\calC_h$ satisfies 
\eqref{eq:interpolationabs} and \eqref{eq:interpolationabstracr};
the discrete energies $\frakF_h$ satisfy \eqref{eq:discenergymonotone};
$ \frakF_h(v_h^0) < + \infty$ uniformly in $h$; and the perturbations $\rho^\dt$ satisfy
\eqref{eq:rhosmall}, then
there exists a constant $c>0$ proportional to $T$ such that
\[
  \| v^\dt - v_h^\dt \|_{\ell^\infty(H)}^2 \leq \| v^0 - v_h^0 \|_H^2 
  + c (\vare_1(h) + \vare_2(h) )
  \| v^\dt \|_{\ell^\infty(W)} + c \dt.
\]
\end{theorem}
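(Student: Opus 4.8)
The plan is to compare the two variational inequalities by cross-testing, following the classical strategy for implicit Euler discretizations of gradient flows, but handling the spatial error through the interpolation estimates \eqref{eq:interpolationabs}--\eqref{eq:interpolationabstracr} and the energy monotonicity \eqref{eq:discenergymonotone}. First I would test the semidiscrete inequality \eqref{eq:discflowvarineq} with $w = v_h^{k+1} \in H$ and the fully discrete inequality \eqref{eq:fulldiscEuler} with $w_h = \calC_h v^{k+1} \in \calH_h$; the latter is legitimate because $v^{k+1} \in W$ by hypothesis. Writing $e^{k+1} = v^{k+1} - v_h^{k+1}$ for the error, $\eta^{k+1} = v^{k+1} - \calC_h v^{k+1}$ for the spatial interpolation error, and using the decomposition $v_h^{k+1} - \calC_h v^{k+1} = -e^{k+1} + \eta^{k+1}$, the sum of the two inequalities produces on the left the quantity $\scl \tfrac{\frakd e^{k+1}}\dt, e^{k+1} \scr$ together with a crossing term $\scl \tfrac{\frakd v_h^{k+1}}\dt, \eta^{k+1} \scr$, and on the right the energy combination $\frakF(v_h^{k+1}) - \frakF(v^{k+1}) + \frakF_h(\calC_h v^{k+1}) - \frakF_h(v_h^{k+1})$ plus the perturbation pairing $\scl \rho^{k+1}, v_h^{k+1} - \calC_h v^{k+1}\scr$.

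The energy combination is the easy part: regrouping it as $[\frakF(v_h^{k+1}) - \frakF_h(v_h^{k+1})] + [\frakF_h(\calC_h v^{k+1}) - \frakF(v^{k+1})]$, the first bracket is nonpositive by the monotonicity \eqref{eq:discenergymonotone} and the second is bounded by $\vare_2(h)\|v^{k+1}\|_W$ by \eqref{eq:interpolationabstracr}. For the left-hand side I would use the polarization identity $2\scl \frakd e^{k+1}, e^{k+1}\scr = \frakd\|e^{k+1}\|_H^2 + \|\frakd e^{k+1}\|_H^2$, then multiply the whole inequality by $2\dt$ and sum over $k$, so that the leading terms telescope to $\|e^m\|_H^2 - \|e^0\|_H^2$ and the nonnegative increments $\|\frakd e^{k+1}\|_H^2$ can be discarded.

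Next come the estimates of the remaining terms. The perturbation pairing I would split again via $v_h^{k+1} - \calC_h v^{k+1} = -e^{k+1} + \eta^{k+1}$ and bound using \eqref{eq:rhosmall}: the part paired with $\eta^{k+1}$, combined with $\|\eta^{k+1}\|_H \le \vare_1(h)\|v^{k+1}\|_W$ from \eqref{eq:interpolationabs}, contributes at order $\vare_1(h)\|v^\dt\|_{\ell^\infty(W)}$, while the part paired with $e^{k+1}$ produces a term of order $\dt$ plus $\dt\sum_k\|e^k\|_H^2$, which is ready for a discrete Gronwall argument (equivalently, one may absorb a small multiple of $\|e^\dt\|_{\ell^\infty(H)}^2$ after taking the maximum over $m$).

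The hard part will be the crossing term $\sum_k \scl \frakd v_h^{k+1}, \eta^{k+1}\scr$, and this is where the proof really turns. A crude bound is doomed: $\tfrac{\frakd v_h^{k+1}}\dt$ is a discrete time derivative that is \emph{not} uniformly bounded, and summing $\|\eta^{k+1}\|_H = O(\vare_1(h))$ over the $O(\dt^{-1})$ time levels would introduce a spurious $\dt^{-1}$ factor. The resolution is exactly the $\frakh^{1/2}$-stability furnished by Lemma~\ref{lem:stabderivh}, which yields $\|v_h^\dt\|_{\frakh^{1/2}(H)}^2 = \sum_k\|\frakd v_h^{k+1}\|_H^2 \le c\dt$ (using that $\frakF_h(v_h^0)$ is bounded uniformly in $h$, that the convex l.s.c.\ functional $\frakF_h$ is minorized, and \eqref{eq:rhosmall}). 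A weighted Young inequality $2|\scl \frakd v_h^{k+1}, \eta^{k+1}\scr| \le \delta^{-1}\|\frakd v_h^{k+1}\|_H^2 + \delta\|\eta^{k+1}\|_H^2$, summed, then gives a bound of the form $c\,\delta^{-1}\dt + c\,\delta\,\dt^{-1}\vare_1(h)^2\|v^\dt\|_{\ell^\infty(W)}^2$; the smallness $O(\dt)$ of the increments precisely compensates the $O(\dt^{-1})$ from the time summation, and optimizing in $\delta$ cancels both powers of $\dt$, leaving exactly $c\,\vare_1(h)\|v^\dt\|_{\ell^\infty(W)}$. Collecting all contributions, taking the maximum over $m$, and invoking discrete Gronwall (or the absorption mentioned above) then delivers the asserted bound, with the constant $c$ controlled in terms of $T$.
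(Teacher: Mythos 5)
Your proposal is correct and follows essentially the same route as the paper: cross-testing the two inequalities with $w = v_h^{k+1}$ and $w_h = \calC_h v^{k+1}$, using monotonicity \eqref{eq:discenergymonotone} and \eqref{eq:interpolationabstracr} on the energy terms, invoking Lemma~\ref{lem:stabderivh} to get $\sum_k \|\frakd v_h^k\|_H^2 \leq c\dt$ for the crossing term, and absorbing the perturbation contribution at the maximizing time index. Your weighted Young inequality with optimization in $\delta$ is just the Cauchy--Schwarz-in-$k$ argument the paper uses implicitly, and your remark that $\frakF_h$ must be bounded below for Lemma~\ref{lem:stabderivh} to be useful is a point the paper leaves tacit.
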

\begin{proof}
To simplify notation, as usual, we will denote $e^k = v^k - v_h^k$.
Set $w = v_h^{k+1}$ in \eqref{eq:discflowvarineq} and $w_h = \calC_h v^{k+1}$ in 
\eqref{eq:fulldiscEuler} and add the results. Using the monotonicity property 
\eqref{eq:discenergymonotone} we obtain
\begin{multline*}
  \scl \frac{ \frakd e^{k+1} }\dt, e^{k+1} \scr
  \leq
  \frakF_h(\calC_h v^{k+1}) - \frakF(v^{k+1})
  {- \scl \rho^{k+1}, e^{k+1} \scr} \\
  + \scl \rho^{k+1}, v^{k+1} - \calC_h v^{k+1} \scr
  +\scl \frac{ \frakd v_h^{k+1} }\dt, \calC_h v^{k+1} - v^{k+1} \scr .
\end{multline*}
Using Lemma~\ref{lem:stabderivh} and \eqref{eq:rhosmall} we obtain 
$\dt^{-1} \sum_{k=1}^K \| \frakd v_h^k \|_H^2 \leq c$, whence \eqref{eq:interpolationabs} and
\eqref{eq:interpolationabstracr} yield
\[
  \scl \frakd e^{k+1}, e^{k+1} \scr \leq c \dt
      \left[ \vare_2(h) + \vare_1(h) \right] \| v^{k+1} \|_W
      + \dt \| \rho^{k+1} \|_H \| e^{k+1} \|_H.
\] 
Since the sequence $\|e^\dt\|_H$ is finite we can assume that its maximum is
attained for $k=\kappa$. Summing the above inequality over $k=\overline{1,\kappa-1}$ we
deduce
\[
  \| e^\kappa \|_H^2 + \| \frakd e^\kappa \|_H^2 \leq
  \| e^0 \|_H^2 +
  c\left( \vare_1(h) + \vare_2(h) \right) \| v^\dt \|_{\ell^\infty(W)}
  + 2\dt \| e^\kappa \|_H \sum_{k=1}^\kappa \| \rho^k\|_H.
\]
An application of Young's inequality,
together with assumption \eqref{eq:rhosmall} allows us to conclude.
\end{proof}

\subsection{Fully Discrete Scheme for TV Flows}
\label{sub:fulldiscTV}

Let us specialize the ideas presented in \S\ref{sub:EulerandFEM} to the case of TV flows.
To do so, we will work on the discrete spaces \eqref{eq:defofVh}, define discrete energies
and approximation operators, and verify that assumptions
\eqref{eq:discenergymonotone}--\eqref{eq:interpolationabstracr}
are satisfied. The conclusion of Theorem~\ref{thm:aprioribetter} will then allow us to obtain
error estimates.

In this setting, we consider the subgradient flow: Find $u_h^\dt \subset \polV_h$ that solves
\begin{equation}
  \scl \frac{\frakd u_h^{k+1} }\dt, u_h^{k+1} - w_h \scr
  + \Psi_h(u_h^{k+1}) - \Psi_h(w_h) \leq 0, \quad \forall w_h \in \polV_h.
\label{eq:ourprobfulldiscvarineq}
\end{equation}
The discrete energy is defined by
\begin{equation}
  \Psi_h(w_h) = \sum_{T \in \calT_h } \int_T \calI_h |\GRAD w_h |,
\label{eq:defofdiscEnergy}
\end{equation}
where the operator $\calI_h$ was defined in \eqref{eq:disclagrange}.
Notice that we have effectively replaced the total variation seminorm by a quadrature formula. Indeed,
we can rewrite $\Psi_h$ as
\[
  \Psi_h(w_h) = \sum_{T \in \calT_h} \sum_{j=1}^{\calN_T} |\GRAD w_h|(z_{j,T}) 
  \int_T \lambda_{j,T}(z) \diff z
  = \sum_{T \in \calT_h} |T|\sum_{j=1}^{\calN_T} \omega_j  |\GRAD w_h|(z_{j,T}),
\]
where $\lambda_{j,T}$ is the coordinate basis function associated with node $z_{j,T}$
and we denote the weights by
$\omega_j = |T|^{-1}\int_T \lambda_{j,T}(z) \diff z$.
Notice that this modification fits into the framework described in \S\ref{sub:EulerandFEM}.
Its utility will become clear in the following paragraph.

The approximation operator is defined as
\begin{equation}
  \calC_h w = \Pi_h w_\epsilon,
\label{eq:defofCh}
\end{equation}
where $\Pi_h$ is the Cl\'ement interpolation operator
and $w_\epsilon$ denotes a regularization of $w$ as in 
Proposition~\ref{prop:approxbv}
with $\epsilon = h^{2/3}$. Proving error estimates reduces to verification of the hypotheses
of \S\ref{sub:EulerandFEM}.

\begin{corollary}[Convergence of TV flow]
\label{cor:aprioribetter}
Let $u^\dt$ be the solution of \eqref{eq:semidiscvarineq}
and $u_h^\dt$ the solution of \eqref{eq:ourprobfulldiscvarineq} with the discrete energy given in
\eqref{eq:defofdiscEnergy}.
If $\Omega$ is star shaped with respect to a point; $u^0 \in L^\infty(\Omega)\cap\BV$;
and $\Psi_h(u_h^0)\leq c < +\infty$ uniformly in $h$, then
\[
  \| u^\dt - u_h^\dt \|_{\ell^\infty(L^2)}^2 \leq \| u^0 - u_h^0 \|_{L^2}^2 + ch^{1/3}.
\]
\end{corollary}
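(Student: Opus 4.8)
The plan is to read Corollary~\ref{cor:aprioribetter} as the instance of Theorem~\ref{thm:aprioribetter} obtained with the identifications $H=\Ldeux$, $\frakF=\Psi$, $\frakF_h=\Psi_h$, $\calH_h=\polV_h$, and $\calC_h$ the operator \eqref{eq:defofCh} with $\epsilon=h^{2/3}$, together with no perturbation, $\rho^\dt\equiv0$. Then \eqref{eq:rhosmall} holds trivially and the $c\dt$ contribution in Theorem~\ref{thm:aprioribetter}, which originated from the $\rho$-terms in its proof, is absent, so the output has exactly the shape claimed in the corollary. The remaining work is to choose the dense subspace $W=\BV\cap L^\infty(\Omega)$ with $\|w\|_W=|\De w|(\Omega)+\|w\|_{L^\infty}$ and to verify the three structural hypotheses: the uniform bound $\|u^\dt\|_{\ell^\infty(W)}\le c$, the energy monotonicity \eqref{eq:discenergymonotone}, and the two approximation estimates \eqref{eq:interpolationabs} and \eqref{eq:interpolationabstracr} with $\vare_1(h)+\vare_2(h)\le ch^{1/3}$.

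For the $\ell^\infty(W)$ bound I would invoke Proposition~\ref{prop:semidiscretestable}: since $u^0\in L^\infty(\Omega)\cap\BV$, the flow is monotone, so $|\De u^k|(\Omega)=\Psi(u^k)\le\Psi(u^0)$, and the discrete maximum principle gives $\|u^\dt\|_{\ell^\infty(L^\infty)}\le\|u^0\|_{L^\infty}$; together with $\|u^k\|_{L^1}\le|\Omega|\,\|u^k\|_{L^\infty}$ these control $\|u^\dt\|_{\ell^\infty(W)}$ by $c\,\|u^0\|_W<\infty$. For the monotonicity \eqref{eq:discenergymonotone}, note that the nodal formula for $\Psi_h$ equals $\sum_T\int_T\calI_h|\GRAD w_h|$. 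On simplices $\GRAD w_h$ is piecewise constant, so $\calI_h|\GRAD w_h|=|\GRAD w_h|$ and in fact $\Psi_h=\Psi$; on cubes $|\GRAD w_h|$ is the Euclidean norm of an (essentially affine) field, hence convex on each cell, and the Lagrange interpolant of a convex function lies above it, giving $\Psi_h(w_h)\ge\Psi(w_h)$ as required (\cf \cite{BartelsTV}).

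The core of the argument is \eqref{eq:interpolationabs}--\eqref{eq:interpolationabstracr}. For the $L^2$ estimate I would combine the $L^1$ bounds of Proposition~\ref{prop:approxbv} with the interpolation inequality \eqref{eq:Lpinterpolation}. Splitting $\calC_h w-w=(\Pi_h w_\epsilon-w_\epsilon)+(w_\epsilon-w)$, the Cl\'ement estimate and Proposition~\ref{prop:approxbv} give $\|\Pi_h w_\epsilon-w_\epsilon\|_{L^1}\le ch^2\|D^2 w_\epsilon\|_{L^1}\le ch^2\epsilon^{-1}|\De w|(\Omega)$ and $\|w_\epsilon-w\|_{L^1}\le\epsilon|\De w|(\Omega)$, so with $\epsilon=h^{2/3}$ the $L^1$ error is $\le ch^{2/3}|\De w|(\Omega)$. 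Since $L^\infty$-stability of mollification and of $\Pi_h$ (via \eqref{eq:clementbdd}) yields $\|\calC_h w-w\|_{L^\infty}\le c\|w\|_{L^\infty}$, inequality \eqref{eq:Lpinterpolation} gives $\|\calC_h w-w\|_{L^2}\le ch^{1/3}|\De w|(\Omega)^{1/2}\|w\|_{L^\infty}^{1/2}\le ch^{1/3}\|w\|_W$, i.e. $\vare_1(h)=ch^{1/3}$. For \eqref{eq:interpolationabstracr} I would write $\Psi_h(\calC_h w)-\Psi(w)$ as the sum of the mollification defect $\Psi(w_\epsilon)-\Psi(w)\le c\epsilon|\De w|(\Omega)$, the Cl\'ement gradient defect $\|\GRAD\Pi_h w_\epsilon\|_{L^1}-\|\GRAD w_\epsilon\|_{L^1}\le ch\|D^2 w_\epsilon\|_{L^1}\le ch\epsilon^{-1}|\De w|(\Omega)$, and the quadrature defect $\Psi_h(\Pi_h w_\epsilon)-\Psi(\Pi_h w_\epsilon)$, which vanishes on simplices and on cubes is again $O(h\epsilon^{-1}|\De w|(\Omega))$; with $\epsilon=h^{2/3}$ this gives $\vare_2(h)=ch^{1/3}$.

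I expect the main obstacle to be the energy-consistency bound \eqref{eq:interpolationabstracr}, and inside it the cube quadrature defect: because $|\GRAD\Pi_h w_\epsilon|$ is only Lipschitz (its second derivatives blow up along the zero set of the gradient), the estimate cannot be routed through the broken Hessian of the discrete function---a crude inverse estimate there yields only an $O(1)$ bound---and must instead be controlled by $h$ times the cell oscillation of $\GRAD\Pi_h w_\epsilon$, which sums to $ch\|D^2 w_\epsilon\|_{L^1}\le ch\epsilon^{-1}|\De w|(\Omega)$ through Proposition~\ref{prop:approxbv}. The exponent $\epsilon=h^{2/3}$ is precisely the one balancing the $\epsilon^{1/2}$ generated by the $L^1$--$L^\infty$ interpolation in $\vare_1$ against the $h\epsilon^{-1}$ in $\vare_2$ (solving $\epsilon^{3/2}=h$), so that both terms equal $h^{1/3}$. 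Feeding $\vare_1(h)+\vare_2(h)\le ch^{1/3}$ and the uniform bound on $\|u^\dt\|_{\ell^\infty(W)}$ into Theorem~\ref{thm:aprioribetter} then yields the asserted estimate.
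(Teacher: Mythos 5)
Your proposal is correct and follows essentially the same route as the paper: the same instantiation of Theorem~\ref{thm:aprioribetter} with $\rho^\dt\equiv0$ (which, as you rightly note, eliminates the $c\dt$ term), the same choice $W=\BV\cap L^\infty(\Omega)$ justified by Proposition~\ref{prop:semidiscretestable}, the same verification of \eqref{eq:discenergymonotone}--\eqref{eq:interpolationabstracr} via the splitting into mollification, Cl\'ement, and quadrature defects, and the same balancing $\epsilon=h^{2/3}$ yielding $\vare_1(h)+\vare_2(h)\leq ch^{1/3}$. The only inaccuracy is your side remark that the quadrature defect ``cannot be routed through the broken Hessian of the discrete function'': the paper does exactly that, bounding $\GRAD(|\GRAD\Pi_h w_\epsilon|)$ pointwise by $|D^2 \Pi_h w_\epsilon|$ and then invoking the $W^2_1$-stability \eqref{eq:clementbdd} of the Cl\'ement operator (rather than a crude inverse estimate), which gives the same $ch\epsilon^{-1}|\De w|(\Omega)$ bound as your cell-oscillation argument.
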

\begin{proof}
Notice, first of all, that the given assumptions on
$u^0$ translate into
$u^\dt \in \ell^\infty(L^\infty(\Omega)\cap\BV)$. Consequently, we set
$W = \BV \cap L^\infty(\Omega)$ and
\[
  \| w \|_W = \| w \|_{L^\infty} + |\De w |(\Omega).
\]
It suffices to verify the abstract assumptions 
\eqref{eq:discenergymonotone}--\eqref{eq:interpolationabstracr}:
\begin{enumerate}
  \item[\eqref{eq:discenergymonotone}:] If $w_h|_T \in \polP_1$, then $\GRAD w_h$ is constant
  and $\calI_h |\GRAD w_h | = |\GRAD w_h |$. If $w_h|_T \in \polQ_1$ instead, then
  its gradient is linear
  and, consequently, $|\GRAD w_h|$ is convex. Then, we only need to realize that if the function
  $\varphi$ is convex, then $\calI_h \varphi|_T \geq \varphi|_T$. Indeed, using that
  $\sum_{j=1}^{\calN_T} \lambda_{j,T} \equiv 1$,
  \[
    \calI_h \varphi|_T(z) = \sum_{j=1}^{\calN_T} \varphi(z_{j,T}) \lambda_{j,T}(z)
    \geq \varphi\left( \sum_{j=1}^{\calN_T} z_{j,T} \lambda_{j,T}(z) \right) = \varphi(z).
  \]
  
  \item[\eqref{eq:interpolationabs}:] \eqref{eq:Lpinterpolation}, together with the
  $L^\infty$-stability of $\Pi_h$, gives
  \[
    \| \Pi_h w_\epsilon - w \|_{L^2}^2 \leq \| \Pi_h w_\epsilon - w \|_{L^1} 
    \| \Pi_h w_\epsilon - w \|_{L^\infty} \leq c \|w\|_{L^\infty} \| \Pi_h w_\epsilon - w \|_{L^1} .
  \]
  To bound the $L^1$ norm in the above inequality 
  we add and subtract $w_\epsilon$ to obtain
  \[
    \| \Pi_h w_\epsilon - w \|_{L^1} \leq 
    \| \Pi_h w_\epsilon - w_\epsilon \|_{L^1} + \| w_\epsilon - w \|_{L^1}
    \leq c \left( \frac{h^2}\epsilon + \epsilon \right) |\De w |(\Omega),
  \]
  where we have used \eqref{eq:lagrange}, for $p=1$, in conjunction with 
  Proposition~\ref{prop:approxbv}.

  \item[\eqref{eq:interpolationabstracr}:] We begin the proof of the energy diminishing property by
  \begin{multline*}
    \Psi_h(\Pi_h w_\epsilon) - \Psi(w) =
    \int_\Omega \calI_h |\GRAD \Pi_h w_\epsilon| - |\De w|(\Omega) \leq \\
    \left( \int_\Omega (\calI_h |\GRAD \Pi_h w_\epsilon| - |\GRAD \Pi_h w_\epsilon|) \right) +
    \left( \int_\Omega (|\GRAD \Pi_h w_\epsilon | - |\GRAD w_\epsilon|)  \right) + \\
    \left( \int_\Omega |\GRAD w_\epsilon | - |\De w|(\Omega)  \right)
    = I + II + III.
  \end{multline*}
  Applying Proposition~\ref{prop:approxbv} yields $III \leq c \epsilon |\De w|(\Omega)$. We next
  invoke the triangle inequality along with interpolation estimate \eqref{eq:lagrange} for
  $\Pi_h$ to arrive at
  \[
    II \leq \int_\Omega \left | \GRAD( \Pi_h w_\epsilon - w_\epsilon) \right|
    \leq ch \| D^2 w_\epsilon \|_{L^1}
    \leq c \frac{h}\epsilon |\De w |(\Omega).
  \]
  For the first term $I$, we use \eqref{eq:lagrange} to obtain
  \[
    I = \int_\Omega \calI_h |\GRAD \Pi_h w_\epsilon| - |\GRAD \Pi_h w_\epsilon|
    \leq c h \sum_{T \in \calT_h } \left\| \GRAD (|\GRAD\Pi_h w_\epsilon|) \right\|_{L^1(T)}.
  \]
  For a smooth function,
  \[
    \GRAD (|\GRAD f|) = \frac{ \GRAD f }{|\GRAD f |} D^2f,
    \quad \text{\ae} \  \left\{ x \in T: \ \GRAD f(x) \neq 0 \right\},
  \]
  and $\GRAD (|\GRAD f|) = 0$ \ae $\{ x \in T:\ \GRAD f = 0\}$. This allows us to conclude that
  $ |\GRAD (|\GRAD f|) | \leq |D^2f|$ \ae in $T$.
  This, together with the bound \eqref{eq:clementbdd} and Proposition~\ref{prop:approxbv},
  shows that
  \[
    \int_\Omega \calI_h |\GRAD \Pi_h w_\epsilon| - |\GRAD \Pi_h w_\epsilon|
    \leq c h \| D^2 w_\epsilon \|_{L^1} \leq c \frac{h}{\epsilon} |\De w|(\Omega).
  \]
\end{enumerate}

The estimates above allow us to see that
\[
  \vare_1(h) = c \left( \frac{h^2}\epsilon + \epsilon \right)^{1/2}
  \quad
  \vare_2(h) = c \left( \frac{h}\epsilon + \epsilon \right).
\]
Setting $\epsilon = h^{2/3}$ we obtain the result.
\end{proof}

\begin{remark}[Energy diminishing interpolation]
\label{rem:tvd}
The proof of Corollary~\ref{cor:aprioribetter} actually shows that if we were able to construct
a \emph{TV diminishing} interpolant, \ie such that
\[
  \int_\Omega |\GRAD \calC_h w | \leq |\De w|(\Omega),
\]
then $\vare_2(h) = ch\epsilon^{-1}$ and, setting $\epsilon = h^{1/2}$ we would improve the
rate of convergence to $\calO(h^{1/4})$. Under the assumption that
$\Omega =(0,1)^d$ and that the mesh is Cartesian, \cite{TVDInterpolation} presents such
a construction.
\end{remark}

\section{An Iterative Scheme for the Fully Discrete Scheme}
\label{sec:soln}
A practical algorithm for the solution of the discrete variational inequalities
\eqref{eq:ourprobfulldiscvarineq} would require identifying the elements of the 
subdifferential $\partial\Psi_h(u)$. In the continuous setting, relying on the results of
\cite{MR750538}, such identification is presented in \cite{MR2033382}.
Let us describe this identification without going into technical details. We introduce the space
\[
  \bX = \left\{ \bq \in \bL^\infty(\Omega): \ \DIV \bq \in \Ldeux,\ \bq\SCAL\bn =0 \right\},
\]
and stress that $w \in \partial\Psi(u)$ if and only if \cite{MR2033382}
\[
  \Psi(u) = \int_\Omega w u, \qquad \text{and} \qquad 
  \exists \bz \in \bX\cap \frakB_1(\bL^\infty(\Omega)):  \ w = -\DIV \bz.
\]
Notice that we can replace the first equality above by $\Psi(u) = \int_\Omega \bz\SCAL \De u$ 
\cite{MR2033382}. This serves as motivation for the discrete energy $\Psi_h$
given in \eqref{eq:defofdiscEnergy}. Indeed,
\begin{align*}
  \Psi_h(w_h) &=
   \sum_{T \in \calT_h } |T| \sum_{j=1}^{\calN_T} \omega_j |\GRAD w_h|(z_{j,T})
  = \sum_{T \in \calT_h } |T|\sum_{j=1}^{\calN_T} \omega_j \sup_{\bq \in \Real^d: |\bq|\leq 1}
  \GRAD w_h(z_{j,T})\SCAL \bq \\
  &= \sup_{\bq_h\in \bX_h:\ \|\bq_h\|_{\bX_h} \leq 1} \scl \GRAD w_h, \bq_h \scr_h
  = \sup_{\bq_h\in \bX_h} \left \{\scl \GRAD w_h, \bq_h \scr_h - \frakI_{\frakB_1(\bX_h)}(\bq_h) \right\},
\end{align*}
where $\frakI_S$ denotes the indicator function of $S$,
the discrete space $\bX_h$ is defined as
\[
  \bX_h = \left\{ \bq_h \in \bL^\infty(\Omega):\ \bq_h|_T \in \polQ_1^d, 
  \forall T \in \calT_h \right\},
  \quad
  \| \bq_h \|_{\bX_h} = \max_{T \in \calT_h } \max_{j=\overline{1,\calN_T} }
  \left\{ |\bq_h(z_{j,T})| \right\},
\]
and the discrete inner product $\scl\cdot,\cdot\scr_h$ is defined by the quadrature rule
\[
  \scl \bq_h, \br_h \scr_h = \sum_{T\in\calT_h} |T|\sum_{j=1}^{\calN_T} \omega_j 
  \bq_h(z_{j,T}) \SCAL \br_h(z_{j,T}).
\]
The latter induces the norm $\| \bq_h \|_h^2 = \scl \bq_h , \bq_h \scr_h$,
which clearly implies
\begin{equation}
  \| \bq_h \|_h \leq c \| \bq_h \|_{\bL^2}, \quad \forall q_h \in \bX_h.
\label{eq:discip}
\end{equation}

In this setting it is not difficult to see that the fully discrete subgradient flow 
\eqref{eq:ourprobfulldiscvarineq} with
energy \eqref{eq:defofdiscEnergy} is equivalent to finding $u_h^\dt \subset \polV_h$ and
$\bz_h^\dt \subset \frakB_1(\bX_h)$ that solve:
\begin{equation}
\label{eq:fulldisc1}
\scl \frac{\frakd u_h^{k+1}}\dt, w_h \scr + \scl \bz_h^{k+1}, \GRAD w_h \scr_h
= 0, \quad \forall w_h \in \polV_h,
\end{equation}
and
\begin{equation}
  \scl \bq_h - \bz_h^{k+1}, \GRAD u_h^{k+1} \scr_h \leq 0, \quad \forall \bq_h \in \frakB_1(\bX_h).
\label{eq:fulldisc2}
\end{equation}

\subsection{Exact Solver}
\label{sub:ExSolver}
Problem \eqref{eq:fulldisc1}--\eqref{eq:fulldisc2} is not a practical numerical scheme,
since it involves the solution of the (local) variational inequality \eqref{eq:fulldisc2}. To 
overcome this difficulty we exploit that \eqref{eq:fulldisc1}--\eqref{eq:fulldisc2}
are the optimality conditions of the functional
$w_h \mapsto \tfrac1{2\dt} \| w_h - u_h^k \|_{L^2}^2 + \int_\Omega\calI_h |\GRAD w_h |$,
which motivates the following algorithm for the solution of 
\eqref{eq:fulldisc1}--\eqref{eq:fulldisc2}
(\cf \cite{BartelsTV,MR2782122}): Let $\sigma>0$ and $\tau>0$ be parameters to be chosen.
Given $g_h=u_h^k \in \polV_h$, set $v_h^0 = g_h$, $\blambda_h^0 = 0$
and find $\{ v_h^\tau, \blambda_h^\tau \} \subset \polV_h \times \frakB_1(\bX_h)$ by
\begin{equation}
  \scl -\frac\sigma\tau \frakd \blambda_h^{l+1} + \GRAD v_h^{\star,l+1},
  \bq_h - \blambda_h^{l+1} \scr_h \leq 0 \quad \forall \bq_h \in \frakB_1(\bX_h),
\label{eq:itlambda}
\end{equation}
where $v_h^{\star,l+1}$ is the extrapolation defined in \eqref{eq:defofstar}, and
\begin{equation}
  \scl \frac{\frakd v_h^{l+1}}\tau, w_h \scr + \scl \blambda_h^{l+1}, \GRAD w_h \scr_h
  + \frac1\dt \scl v_h^{l+1}, w_h \scr = \frac1\dt \scl g_h, w_h \scr.
\label{eq:itv}
\end{equation}
Finally, set $u_h^{k+1}=v_h^\infty$.

The last assertion needs to be justified, this is given in the following.

\begin{theorem}[Convergence of the inner iterations]
\label{thm:convinner}
Assume the inverse inequality $\| \GRAD w_h \|_{L^2} \leq c_i h^{-1} \| w_h \|_{L^2}$
holds, and the parameters $\tau$ and $\sigma$ are chosen so that $\tau \leq c_i \sigma h$.
Then the inner iterative loop \eqref{eq:itlambda}--\eqref{eq:itv} converges, in the
sense that for every $L\geq1$,
\[
  \sum_{l=0}^L \left(
    \frac\tau\dt \| u_h^{k+1} - v_h^{l+1} \|_{L^2}^2 
    + \frac\sigma2 \| \frakd \blambda_h^{l+1} \|_h^2
  \right) \leq c.
\]
\end{theorem}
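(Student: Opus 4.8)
The plan is to read the inner loop \eqref{eq:itlambda}--\eqref{eq:itv} as a Chambolle--Pock / Arrow--Hurwicz primal-dual iteration for the saddle-point reformulation of the proximal functional $f(w_h)=\tfrac1{2\dt}\|w_h-u_h^k\|_{L^2}^2+\sup_{\bq_h\in\frakB_1(\bX_h)}\scl\GRAD w_h,\bq_h\scr_h$, whose unique saddle point is precisely the pair $(u_h^{k+1},\bz_h^{k+1})$ characterized by \eqref{eq:fulldisc1}--\eqref{eq:fulldisc2} (existence of this fixed point being guaranteed by strict convexity of the prox functional). Accordingly I would introduce the primal and dual errors $e^{l}=v_h^l-u_h^{k+1}$ and $\bepsilon_h^l=\blambda_h^l-\bz_h^{k+1}$, noting that $\frakd v_h^{l}=\frakd e^{l}$ and $\|u_h^{k+1}-v_h^{l+1}\|_{L^2}=\|e^{l+1}\|_{L^2}$, so that the sum in the statement is exactly a sum of squared error/increment quantities.

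First I would test the primal step \eqref{eq:itv} with $w_h=e^{l+1}$ and subtract \eqref{eq:fulldisc1}, and test the dual step \eqref{eq:itlambda} with $\bq_h=\bz_h^{k+1}\in\frakB_1(\bX_h)$ while testing the exact complementarity \eqref{eq:fulldisc2} with $\bq_h=\blambda_h^{l+1}\in\frakB_1(\bX_h)$. Applying the elementary identity $\scl \frakd a,a-b\scr=\tfrac12\bigl(\|a-b\|^2-\|a-\frakd a-b\|^2+\|\frakd a\|^2\bigr)$ in both the $L^2$ and the discrete $\scl\cdot,\cdot\scr_h$ inner products, the primal line produces a telescoping of $\tfrac1{2\tau}\|e^{l}\|_{L^2}^2$, a genuine dissipation $\tfrac1{2\tau}\|\frakd v_h^{l+1}\|_{L^2}^2$, a coercive contribution of size $\dt^{-1}\|e^{l+1}\|_{L^2}^2$ coming from the strong convexity of the quadratic proximal part $\tfrac1{2\dt}\|\cdot-u_h^k\|_{L^2}^2$, and a primal coupling $\scl\bepsilon_h^{l+1},\GRAD e^{l+1}\scr_h$; the dual line produces the telescoping of $\tfrac\sigma{2\tau}\|\bepsilon_h^l\|_h^2$, the dissipation $\tfrac\sigma{2\tau}\|\frakd\blambda_h^{l+1}\|_h^2$, and a dual coupling $\scl\GRAD e_\star^{l+1},\bepsilon_h^{l+1}\scr_h$, where $e_\star^{l+1}=v_h^{\star,l+1}-u_h^{k+1}$ involves the extrapolation \eqref{eq:defofstar}.

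The decisive step is the joint treatment of the two coupling terms after multiplying both estimates by $\tau$ and summing over $l$. Using $v_h^{\star,l+1}=2v_h^l-v_h^{l-1}$ (so that $v_h^{l+1}-v_h^{\star,l+1}=\frakd v_h^{l+1}-\frakd v_h^l$) together with the saddle-point optimality of $(u_h^{k+1},\bz_h^{k+1})$, I would reorganize the sum of the two couplings into a single telescoping quantity $T^{l+1}-T^{l}$ with $T^{l+1}=\scl\GRAD\frakd v_h^{l+1},\bepsilon_h^{l+1}\scr_h$, plus a nonnegative primal-dual gap that I discard, minus a residual second-difference coupling $R^l=\scl\GRAD\frakd v_h^{l},\frakd\blambda_h^{l+1}\scr_h$. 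With the natural convention $\frakd v_h^{0}=0$ (consistent with $v_h^{\star,1}=v_h^0$), the $l=0$ boundary contributions vanish, and after telescoping only $T^{L+1}$ and $\sum_l R^l$ survive on the right-hand side, while the left accumulates $\sum_l\bigl(\tfrac{2\tau}\dt\|e^{l+1}\|_{L^2}^2+\|\frakd v_h^{l+1}\|_{L^2}^2+\sigma\|\frakd\blambda_h^{l+1}\|_h^2\bigr)$ together with the boundary squares $\|e^{L+1}\|_{L^2}^2+\sigma\|\bepsilon_h^{L+1}\|_h^2$.

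I expect the main obstacle to be the absorption of the residual coupling $R^l$ and of $T^{L+1}$ into the available dissipation, and this is the only place the hypotheses enter. Here I would estimate $|\scl\GRAD\frakd v_h^{l},\frakd\blambda_h^{l+1}\scr_h|\le c\,\|\GRAD\frakd v_h^{l}\|_{L^2}\|\frakd\blambda_h^{l+1}\|_h$ via \eqref{eq:discip}, then invoke the inverse inequality $\|\GRAD\frakd v_h^{l}\|_{L^2}\le c_i h^{-1}\|\frakd v_h^{l}\|_{L^2}$ and Young's inequality, so that each residual is split between a multiple of $\|\frakd v_h^{l}\|_{L^2}^2$ and a multiple of $\|\frakd\blambda_h^{l+1}\|_h^2$; the balance in Young's inequality is controlled precisely by the step-size restriction $\tau\le c_i\sigma h$, which is what ensures these residuals are dominated by, rather than dominate, the dissipation (consuming at most half of $\sum_l\|\frakd v_h^{l+1}\|_{L^2}^2$, half of $\sum_l\sigma\|\frakd\blambda_h^{l+1}\|_h^2$, and the boundary term $\sigma\|\bepsilon_h^{L+1}\|_h^2$). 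Dividing the resulting inequality by $2$ reproduces exactly the coefficients $\tfrac\tau\dt$ and $\tfrac\sigma2$ of the statement, and the surviving right-hand side collapses to the initial errors $\tfrac12\bigl(\|e^{0}\|_{L^2}^2+\sigma\|\bepsilon_h^{0}\|_h^2\bigr)$, which are finite since $v_h^0=g_h$, $\blambda_h^0=0$, and $\bz_h^{k+1}\in\frakB_1(\bX_h)$ forces $\|\bz_h^{k+1}\|_h\le c$; this yields the uniform constant $c$. The telescoping bookkeeping is lengthy but routine; the genuinely delicate point is tracking the constants so that \eqref{eq:discip}, the inverse estimate, and the relation $\tau\le c_i\sigma h$ combine to make the coupling residuals absorbable.
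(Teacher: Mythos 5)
Your proposal is correct and follows essentially the same route as the paper's proof: the same error equations obtained by subtracting \eqref{eq:itv} from \eqref{eq:fulldisc1} and combining \eqref{eq:itlambda} with \eqref{eq:fulldisc2}, the same test functions, and the same treatment of the resulting second-difference coupling term via summation by parts \eqref{eq:abel}, the norm bound \eqref{eq:discip}, the inverse inequality, and Young/Cauchy--Schwarz under the restriction $\tau \leq c_i \sigma h$. The paper compresses the final telescoping-and-absorption step into a citation of \cite[Proposition 3.1]{BartelsTV}, whereas you spell it out (your $T^{l}$/$R^{l}$ bookkeeping is exactly that step), but the mechanics coincide.
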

\begin{proof}
We follow the arguments of \cite[Proposition 3.1]{BartelsTV}. To alleviate the notation, set
$e_h^l = u_h^{k+1} - v_h^l$ and $\bE_h^l = \bz_h^{k+1} - \blambda_h^l$.
Subtract \eqref{eq:itv} from \eqref{eq:fulldisc1} to obtain
\begin{equation}
  \scl \frac{\frakd e_h^{l+1} }\tau, w_h \scr + \scl \bE_h^{l+1}, \GRAD w_h \scr_h
  + \frac1\dt \scl e_h^{l+1}, w_h \scr
  = 0.
\label{eq:errv}
\end{equation}

Set $\bq_h = \bz_h^{k+1}$ in \eqref{eq:itlambda}, $\bq_h = \blambda_h^{l+1}$ in
\eqref{eq:fulldisc2} and add them, to obtain
\begin{equation}
  -\scl -\frac\sigma\tau \frakd \bE_h^{l+1} 
  + \GRAD\left( u_h^{k+1} - v_h^{\star,l+1} \right), \bE_h^{l+1} \scr_h \leq 0.
\label{eq:errlambda}
\end{equation}

Set $w_h = 2\tau e_h^{l+1}$ in \eqref{eq:errv} to obtain
\begin{equation}
  \frakd \| e_h^{l+1} \|_{L^2}^2 + \| \frakd e_h^{l+1} \|_{L^2}^2
  + 2\tau \scl \bE_h^{l+1}, \GRAD e_h^{l+1} \scr_h + \frac{2\tau}\dt \| e_h^{l+1} \|_{L^2}^2
  \leq 0.
\label{eq:sanspLap}
\end{equation}
Multiply \eqref{eq:errlambda} by $2\tau$ and add it to \eqref{eq:sanspLap} and add over
$l=\overline{0,L-1}$ to obtain
\begin{multline*}
  \| e_h^L \|_{L^2}^2 + \sigma \| \bE_h^L \|_h^2
  + \sum_{l=0}^{L-1} \left( 
    \| \frakd e_h^{l+1} \|_{L^2}^2
    + \sigma \| \frakd \bE_h^{l+1} \|_h^2
    + \frac{2\tau}\dt \| e_h^{l+1} \|_{L^2}^2
    \right) \\
  \leq \| e_h^0 \|_{L^2}^2 + \sigma \| \bE_h^0 \|_h^2
  + 2\tau \sum_{l=0}^{L-1} \scl  \frakd^2 \GRAD e_h^{l+1}, \bE_h^{l+1} \scr_h.
\end{multline*}
Now we proceed as in \cite[Proposition 3.1]{BartelsTV} with the last term: we use \eqref{eq:abel} 
to sum by parts and next employ
inequality \eqref{eq:discip}, the inverse inequality and
repeated applications of Cauchy-Schwarz to obtain the desired result.
\end{proof}

\begin{remark}[Convergence of the Lagrange multiplier]
As already mentioned in \cite[Remark 3.2(i)]{BartelsTV}, convergence 
$\blambda_h^l \rightarrow \bz_h^{k+1}$, as $l\rightarrow \infty$, cannot be expected in general, due to 
the non-uniqueness of $\bz_h^{k+1}$. However the increments
$\frakd \blambda_h^l$ converge to zero.
\end{remark}

\begin{remark}[Solution of the discrete variational inequality]
It is not difficult to see that the solution to the discrete variational inequality \eqref{eq:itlambda}
is explicit
\[
  \blambda_h^{l+1}(z_{j,T}) = \dfrac{ \blambda_h^l + \tfrac\sigma\tau \GRAD v_h^{\star,l+1} }
  { \max\{ 1, | \blambda_h^l + \tfrac\sigma\tau \GRAD v_h^{\star,l+1} |(z_{j,T}) \} }.
\]
\end{remark}

\subsection{Inexact Solver}
\label{sub:InexSolver}
It is not feasible to iterate in \eqref{eq:itlambda}--\eqref{eq:itv} until convergence.
For this reason we included in the analysis presented in \S\ref{sub:EulerandFEM}
a perturbation term since if we were to stop after $L-1$ iterations, 
\eqref{eq:itv} would become
\[
  \frac1\dt \scl v_h^L - u_h^k, w_h \scr + \scl \blambda_h^L, \GRAD w_h \scr_h
  = -\frac1\tau \scl \frakd v_h^L, w_h \scr,
\]
and \eqref{eq:itlambda} would read
\[
  \scl \br_h^L + \GRAD v_h^L, \bq_h - \blambda_h^L \scr_h \leq 0,
\]
with $\br_h^L = -\tfrac\sigma\tau \frakd \blambda_h^L - \frakd^2 \GRAD v_h^L$.
Doing the formal replacements $v_h^L \leftarrow \widetilde u_h^{k+1}$ and 
$\blambda_h^L \leftarrow \widetilde\bz_h^{k+1}$
on the left hand side of these identities, setting $w_h = \widetilde u_h^{k+1} - \widetilde w_h$ and adding them
we obtain
\begin{equation}
  \scl \frac{\frakd \widetilde u_h^{k+1} }\dt, \widetilde u_h^{k+1} - \widetilde  w_h \scr +
  \widetilde\Psi_h(\widetilde u_h^{k+1}) - \widetilde\Psi_h(\widetilde  w_h) 
  \leq \scl -\frac{ \frakd v_h^L }\tau, \widetilde u_h^{k+1} - \widetilde w_h \scr,
\label{eq:fulldiscvarineqperturbed}  
\end{equation}
with
\[
  \widetilde\Psi_h(w_h) =  \sum_{T \in \calT_h }\int_T \calI_h |\GRAD w_h + \br_h^L|.
\]
The non-homogeneous term on the right hand side can be understood as a perturbation
$\rho^\dt$. In addition, the modified discrete energies
can be controlled owing to Theorem~\ref{thm:convinner}. We make these ideas rigorous in the following.

\begin{theorem}[Convergence of fully discrete scheme with inexact solutions]
Let $\Omega$ be star-shaped with respect to a point and assume that $\ue_0 \in \BV \cap L^\infty(\Omega)$.
If $\{ \widetilde u_h^\dt, \widetilde \bz_h^\dt \} \subset \polV_h \times \bX_h$ are approximations to
the solution of \eqref{eq:fulldisc1}--\eqref{eq:fulldisc2}, computed with algorithm
\eqref{eq:itlambda}--\eqref{eq:itv} in such a way that, for every time step $k$, the inequalities
\begin{equation}
  \| \tau^{-1} \frakd v_h^L \|_{L^2} \leq c \dt^{1/2}, \qquad
  \| \br_h^L \|_{L^1} \leq c \dt,
\label{eq:supercondition}
\end{equation}
are satisfied, then the following error estimate holds
\[
  \left\| \ue - \widehat{ \widetilde u_h^{k+1}} \right\|_{L^\infty(L^2)} \leq 
  \| \ue_0 - u_h^0 \|_{L^2} +
  c( \dt^{1/2} + h^{1/6} ).
\]
The spatial rate of convergence becomes $\calO(h^{1/4})$ as soon as $\Omega = (0,1)^d$ and the
mesh $\calT_h$ is Cartesian.
\end{theorem}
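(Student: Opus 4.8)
The plan is to bound the full error $\|\ue - \widehat{\widetilde u_h}\|_{L^\infty(L^2)}$ by splitting it, through the triangle inequality, into the purely temporal error $\|\ue - \hu\|_{L^\infty(L^2)}$ between the flow and the Rothe interpolant of the semidiscrete solution $u^\dt$, and the error $\|\hu - \widehat{\widetilde u_h}\|_{L^\infty(L^2)}$ between $u^\dt$ and the inexact fully discrete sequence $\widetilde u_h^\dt$. Since $u^0 = \ue_0$ with $\ue_0 \in \BV \cap L^\infty(\Omega)$, Corollary~\ref{cor:convsemidiscrete} (with vanishing initial-data term) yields $\|\ue - \hu\|_{L^\infty(L^2)} \leq c\dt^{1/2}$. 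Because the Rothe interpolation is linear and $\|\widehat\phi\|_{L^\infty(L^2)} = \|\phi^\dt\|_{\ell^\infty(L^2)}$, the second term equals $\|u^\dt - \widetilde u_h^\dt\|_{\ell^\infty(L^2)}$, so everything reduces to estimating this discrete quantity.

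For that quantity I would invoke the abstract machinery of \S\ref{sub:EulerandFEM}, applied to the perturbed inequality \eqref{eq:fulldiscvarineqperturbed}. The identification is $H = \Ldeux$, $\frakF = \Psi$, with $u^\dt$ playing the role of the time-discrete solution and $\widetilde u_h^\dt$ that of the fully discrete one, discrete energy $\frakF_h = \widetilde\Psi_h$, approximation operator $\calC_h w = \Pi_h w_\epsilon$ from \eqref{eq:defofCh} with $\epsilon = h^{2/3}$, and perturbation $\rho^{k+1} = -\tau^{-1}\frakd v_h^L$. The first bound in \eqref{eq:supercondition} is precisely the smallness hypothesis \eqref{eq:rhosmall}, while the approximation estimate \eqref{eq:interpolationabs} for $\calC_h$ and the rates $\vare_1(h), \vare_2(h)$ are already computed in the proof of Corollary~\ref{cor:aprioribetter}; with $\epsilon = h^{2/3}$ they give $\vare_1(h) + \vare_2(h) = \calO(h^{1/3})$. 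If the hypotheses held verbatim, Theorem~\ref{thm:aprioribetter} would yield $\|u^\dt - \widetilde u_h^\dt\|_{\ell^\infty(L^2)}^2 \leq \|u^0 - u_h^0\|_{L^2}^2 + ch^{1/3} + c\dt$, and taking square roots and recombining with the temporal term would deliver $\|\ue_0 - u_h^0\|_{L^2} + c(\dt^{1/2} + h^{1/6})$.

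The main obstacle, and the step requiring genuine care, is that $\widetilde\Psi_h$ carries the shift $\br_h^L$, so it satisfies neither the monotonicity \eqref{eq:discenergymonotone} nor the energy-diminishing bound \eqref{eq:interpolationabstracr} exactly; this is where the second bound in \eqref{eq:supercondition} becomes essential. Using positivity of the quadrature weights $\omega_j$ and the triangle inequality at the nodes $z_{j,T}$, I would show $\widetilde\Psi_h(w_h) \geq \Psi_h(w_h) - \sum_{T\in\calT_h}\int_T \calI_h|\br_h^L|$ and $\widetilde\Psi_h(\calC_h w) \leq \Psi_h(\calC_h w) + \sum_{T\in\calT_h}\int_T \calI_h|\br_h^L|$, where the defect satisfies $\sum_{T\in\calT_h}\int_T \calI_h|\br_h^L| \leq c\|\br_h^L\|_{L^1} \leq c\dt$. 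Since $\Psi_h$ itself verifies \eqref{eq:discenergymonotone} and \eqref{eq:interpolationabstracr} (established in Corollary~\ref{cor:aprioribetter}), the effect of the shift is that monotonicity and energy diminishing each hold up to an additive $\calO(\dt)$ defect.

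I would then re-run the proof of Theorem~\ref{thm:aprioribetter} while tracking these defects. Each defect enters the per-step bound on $\scl \frakd e^{k+1}, e^{k+1} \scr$ with an explicit factor $\dt$, so after summation over the $\calO(\dt^{-1})$ time steps it contributes only $\calO(\dt)$; similarly, the perturbation term, controlled via \eqref{eq:rhosmall} and Young's inequality, adds $\calO(\dt)$. These corrections are absorbed into the $c\dt$ already present in the conclusion of Theorem~\ref{thm:aprioribetter}, so the squared bound $\|u^\dt - \widetilde u_h^\dt\|_{\ell^\infty(L^2)}^2 \leq \|u^0 - u_h^0\|_{L^2}^2 + ch^{1/3} + c\dt$ survives, and the stated estimate follows. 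For the sharper $\calO(h^{1/4})$ rate on Cartesian meshes of $(0,1)^d$, I would replace $\calC_h$ by the TV-diminishing interpolant of Remark~\ref{rem:tvd}, for which $\vare_2(h) = ch\epsilon^{-1}$; choosing $\epsilon = h^{1/2}$ then gives $\vare_1(h) + \vare_2(h) = \calO(h^{1/4})$ and hence the improved spatial order after taking square roots.
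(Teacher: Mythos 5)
Your proof of the main estimate is correct, but it takes a genuinely different route from the paper's. You use a two-term decomposition, $\ue \to u^\dt \to \widetilde u_h^\dt$, and absorb the inexactness into the abstract framework of \S\ref{sub:EulerandFEM}: you take $\frakF_h = \widetilde\Psi_h$, $\rho^{k+1} = -\tau^{-1}\frakd v_h^L$, show that \eqref{eq:discenergymonotone} and \eqref{eq:interpolationabstracr} hold up to additive $\calO(\dt)$ defects controlled by $\|\br_h^L\|_{L^1}\le c\dt$, and re-run Theorem~\ref{thm:aprioribetter} tracking those defects. The paper instead uses a three-term decomposition, $\ue \to u^\dt \to u_h^\dt \to \widetilde u_h^\dt$: the first two legs are Corollary~\ref{cor:convsemidiscrete} and Corollary~\ref{cor:aprioribetter} cited as black boxes, and the third leg cross-tests the two discrete variational inequalities, taking $w_h = \widetilde u_h^{k+1}$ in \eqref{eq:ourprobfulldiscvarineq} and $\widetilde w_h = u_h^{k+1}$ in \eqref{eq:fulldiscvarineqperturbed}. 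Since both sequences live in $\polV_h$, that comparison needs no approximation operator $\calC_h$, no $\vare_i(h)$, and no abstract hypotheses: the energy discrepancies $\Psi_h - \widetilde\Psi_h$ appear symmetrically, are bounded by $c\|\br_h^L\|_{L^1}\le c\dt$, and yield $\|u_h^\dt - \widetilde u_h^\dt\|_{\ell^\infty(L^2)}\le c\dt^{1/2}$. The paper's route is shorter and cleanly separates the inexactness error ($\calO(\dt^{1/2})$) from the spatial error ($\calO(h^{1/6})$); your route avoids introducing the exact discrete solution $u_h^\dt$, but pays by having to redo both Lemma~\ref{lem:stabderivh} and Theorem~\ref{thm:aprioribetter} with an energy that is not only perturbed but changes from step to step, since $\br_h^L$ depends on $k$. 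You gloss over that step-dependence: in Lemma~\ref{lem:stabderivh} the energies no longer telescope exactly, and one must also check $\widetilde\Psi_h(\widetilde u_h^0)\le c$ (which, like the paper's reliance on Corollary~\ref{cor:aprioribetter}, implicitly requires $\Psi_h(u_h^0)\le c$ uniformly). This does work out — each telescoping defect is $\calO(\dt)$ per step and carries an extra factor $\dt$ before summation — but it is a real obligation your sketch should acknowledge.

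One genuine flaw, confined to the secondary claim: your justification of the $\calO(h^{1/4})$ Cartesian rate is internally inconsistent. With $\vare_2(h)=ch\epsilon^{-1}$ and $\epsilon=h^{1/2}$ you correctly get $\vare_1(h)+\vare_2(h)=\calO(h^{1/4})$, but since the abstract theorem bounds the \emph{squared} error by $\vare_1+\vare_2$, taking square roots gives $\calO(h^{1/8})$, not $\calO(h^{1/4})$. This slip is inherited from Remark~\ref{rem:tvd} as literally written (and the paper's proof merely cites that remark, so you are no worse off than the paper on this point), but the computation you wrote does not deliver the claimed rate: the actual source of $\calO(h^{1/4})$ is that the TV-diminishing interpolant of \cite{TVDInterpolation} approximates $BV\cap L^\infty$ functions directly, without the intermediate mollification, so that $\vare_1(h)=\calO(h^{1/2})$ and $\vare_2(h)=0$, whence the squared error is $\calO(h^{1/2})$.
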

\begin{proof}
Since $\ue_0 \in \BV\cap L^\infty(\Omega)$, the solution, $u^\dt$, to \eqref{eq:semidiscvarineq} 
converges, with order $\calO(\dt^{1/2})$, to the exact solution.
This also guarantees, via Corollary~\ref{cor:aprioribetter},
the convergence of $u_h^\dt$, solution of \eqref{eq:ourprobfulldiscvarineq}, to $u^\dt$
with order $\calO(h^{1/6})$.

To study the convergence of $\widetilde u_h^\dt$ to $u_h^\dt$ we proceed as in Theorem~\ref{thm:aprioribetter}.
Set $w_h = \widetilde u_h^{k+1}$ in \eqref{eq:ourprobfulldiscvarineq} and $\widetilde  w_h = u_h^{k+1}$ in
\eqref{eq:fulldiscvarineqperturbed} and add the result. On denoting $e_h^k = u_h^k - \widetilde u_h^k$,
we obtain 
\[
  \frac1\dt \scl \frakd e_h^{k+1}, e_h^{k+1} \scr + \Psi_h(u_h^{k+1}) - \widetilde \Psi_h(u_h^{k+1})
  + \widetilde \Psi_h(\widetilde u_h^{k+1}) - \Psi_h(\widetilde u_h^{k+1}) \leq
  \scl \frac{ \frakd v_h^L }{\tau}, e_h^{k+1} \scr.
\]
Since
\begin{align*}
  \widetilde \Psi_h(w_h) - \Psi_h(w_h) &=
  \sum_{T \in \calT_h }\int_T (\calI_h|\GRAD w_h + \br_h^L | - \calI_h|\GRAD w_h|)
  \leq \sum_{T \in \calT_h } \int_T \calI_h |\br_h^L | \\
  &\leq c \| \br_h^L \|_{L^1},
\end{align*}
we notice that condition \eqref{eq:supercondition} suffices to guarantee that
$\| e_h^\dt \|_{\ell^\infty(L^2)} \leq c \dt^{1/2}$.
Conclude with a trivial application of the triangle inequality.

The improvement on the spatial rate of convergence is due to Remark~\ref{rem:tvd}.
\end{proof}

\section{Numerical Experiments}
\label{sec:Numerics}

To illustrate the theory developed in the preceding sections, here we present a series of
numerical experiments. We implemented scheme \eqref{eq:itlambda}--\eqref{eq:itv} where the
stopping criterion for the inner iterations is given by \eqref{eq:supercondition}.
The implementation was done with
the help of the \texttt{deal.II} library (see \cite{BHK,BHK07}). Unless noted otherwise,
we set $\dt = \tfrac{\sqrt2 h}{10}$, $\tau = \dt$ and $\sigma = 0.1$.

\subsection{The Characteristic of a Convex Set}
\label{sub:Ball}
Let $E \Subset \Omega \subset \Real^2$, where $E$ is convex, connected and of 
finite perimeter. Define $ \lambda_E = \tfrac{ P(E) }{|E|}$,
where $P(E)$ stands for the perimeter of $E$.
Assume, in addition, that $\partial E \in \calC^{1,1}$ and that
the curvature of $E$, $\varkappa$ satisfies $\| \varkappa \|_{L^\infty(\partial E)} \leq \lambda_E$.
Then, according to \cite{MR2033382,MR1929886},
if $\ue_0 = \chi_E$ the solution to \eqref{eq:subTVflow} with homogeneous Dirichlet boundary conditions is
$\ue(x,t) = \left( 1 - \lambda_E t \right)^+ \chi_E(x)$.

\begin{figure}[ht]
  \begin{center}
    \includegraphics[scale=0.5]{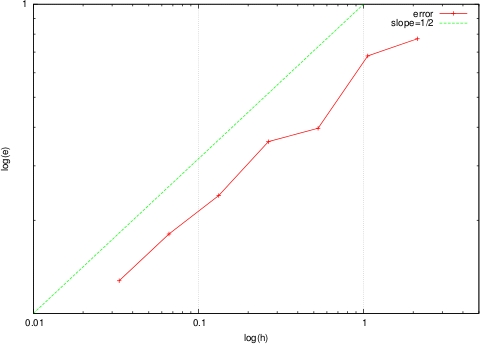}
  \end{center}
  \caption{$L^\infty(L^2)$-errors for the evolution of the characteristic $\chi_B$
  of a circle $B$ with exact solution 
  $\ue(x,t) = \left( 1 - \lambda_B t \right)^+ \chi_B(x)$ and $\lambda_B = P(B)/|B|$,
  \cite{MR2033382,MR1929886} (see \S\ref{sub:Ball}).}
\label{fig:charball}
\end{figure}

Figure~\ref{fig:charball} shows the $L^\infty(L^2)$-error between the discrete and numerical 
solutions in the case when $\Omega = (-3,3)^2$ and $E = B(0,1)$. As we can see the behavior of
the error is actually better (\ie $\calO(h^{1/2})$) than what our theory predicts.

\subsection{The Characteristic of a Disconnected Set}
\label{sub:ThreeBalls}
With the setting of \S\ref{sub:Ball}, references \cite{MR2033382,MR1929886} also show that if
$\ue_0 = \sum_{i=1}^3 \chi_{B(x_i,r_i)},$
where the centers are at the vertices of an equilateral triangle of unit side-length and the radii
satisfy $r_i \leq 0.2$, then 
$\ue(x,t) = \sum_{i=1}^3 (1-\lambda_{B_i}t)^+\chi_{B(x_i,r_i)}$.

\begin{figure}[ht]
\begin{center}
  \includegraphics[scale=0.15]{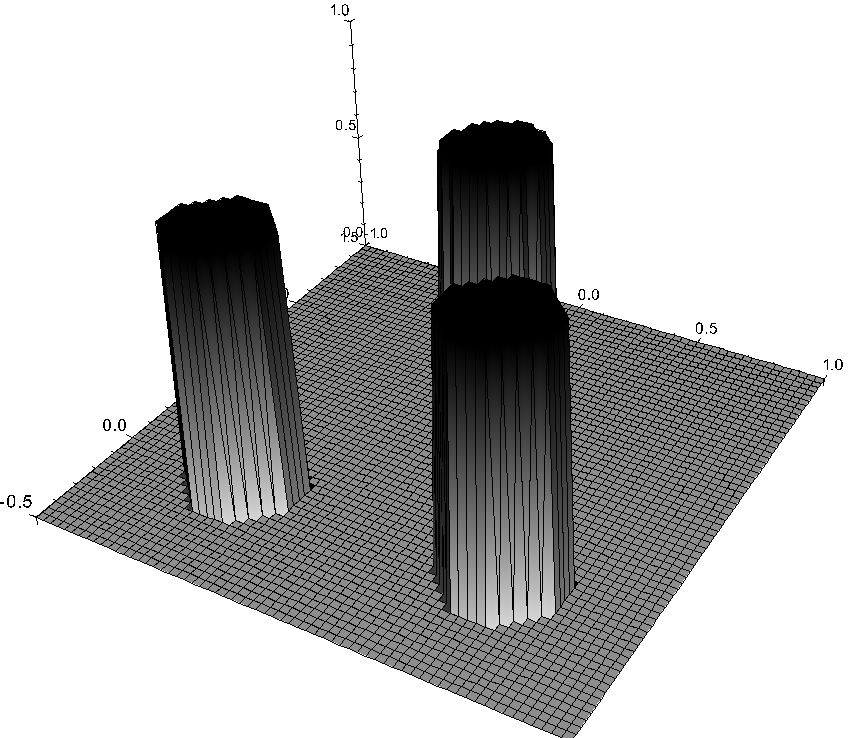}\hfil
  \includegraphics[scale=0.15]{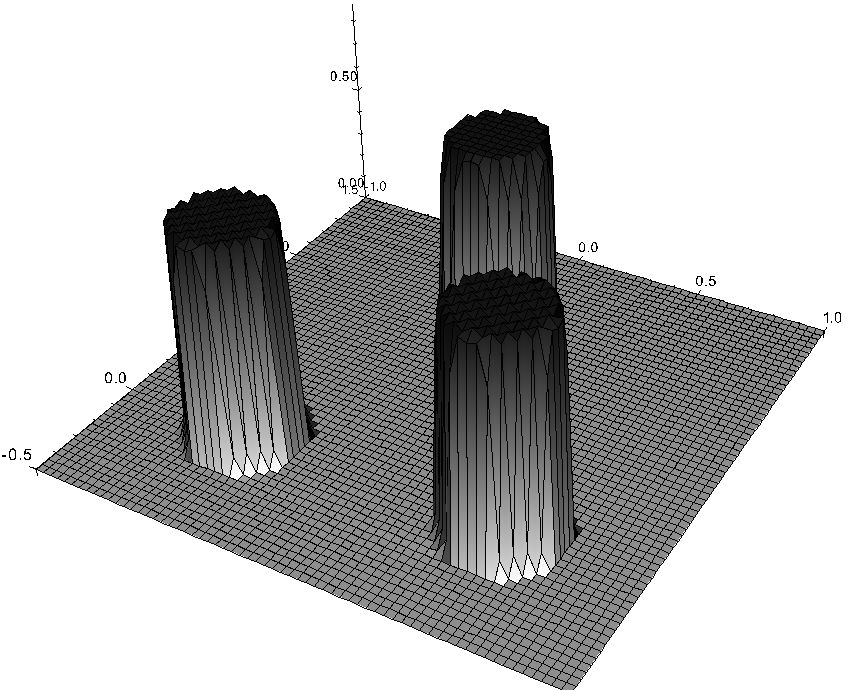} \\
  \includegraphics[scale=0.15]{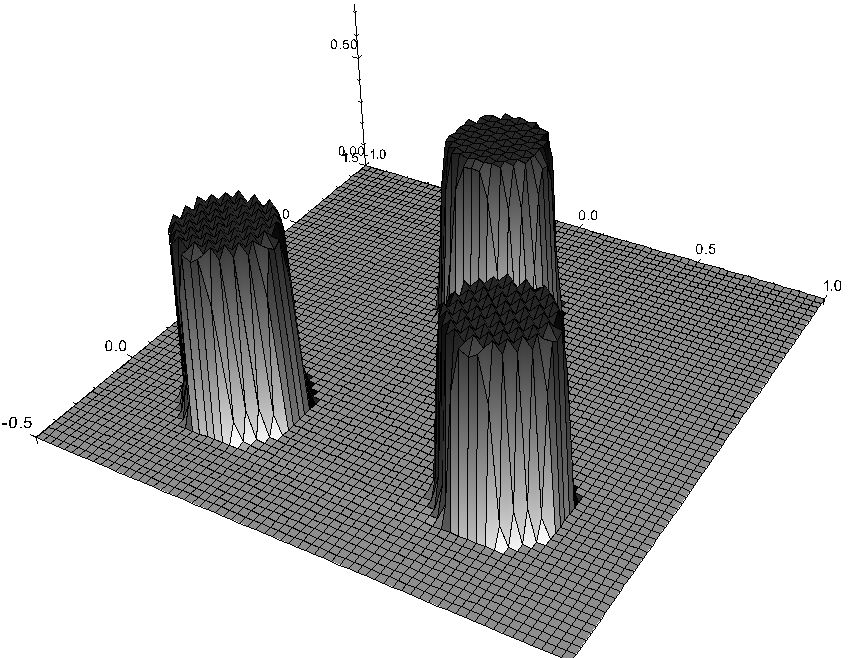}\hfil
  \includegraphics[scale=0.15]{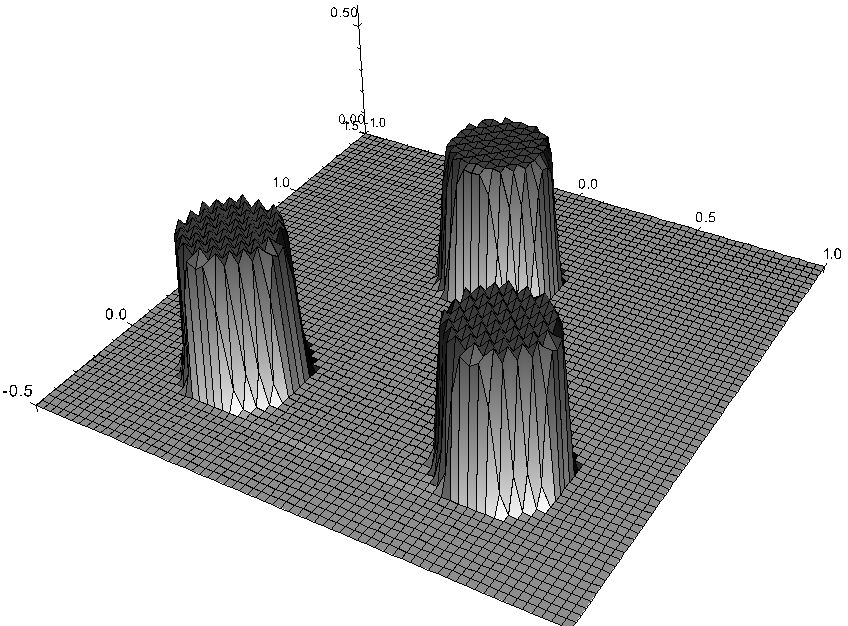} \\
  \includegraphics[scale=0.15]{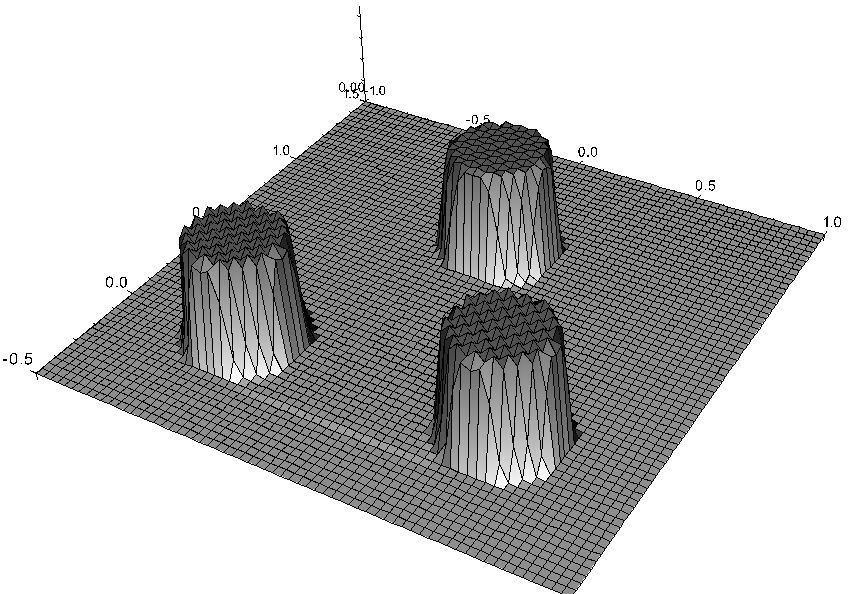}\hfil
  \includegraphics[scale=0.15]{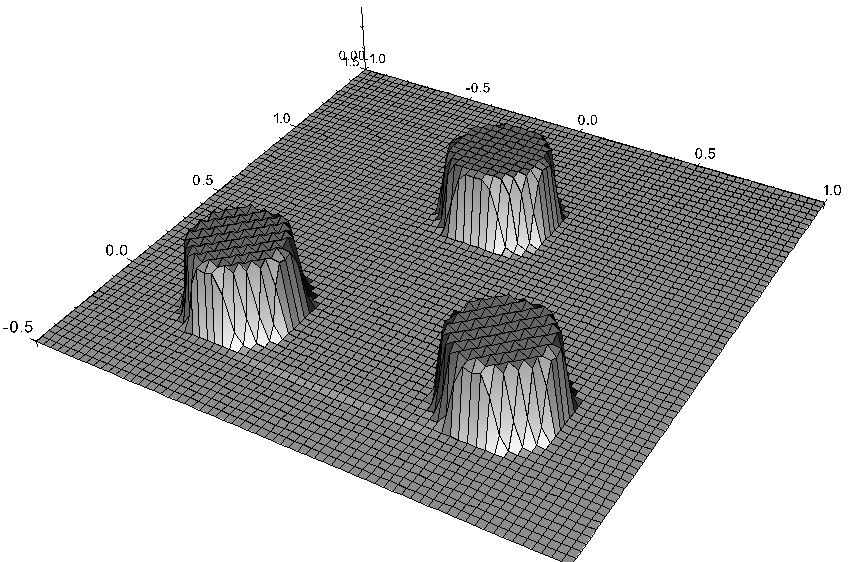} \\
  \includegraphics[scale=0.15]{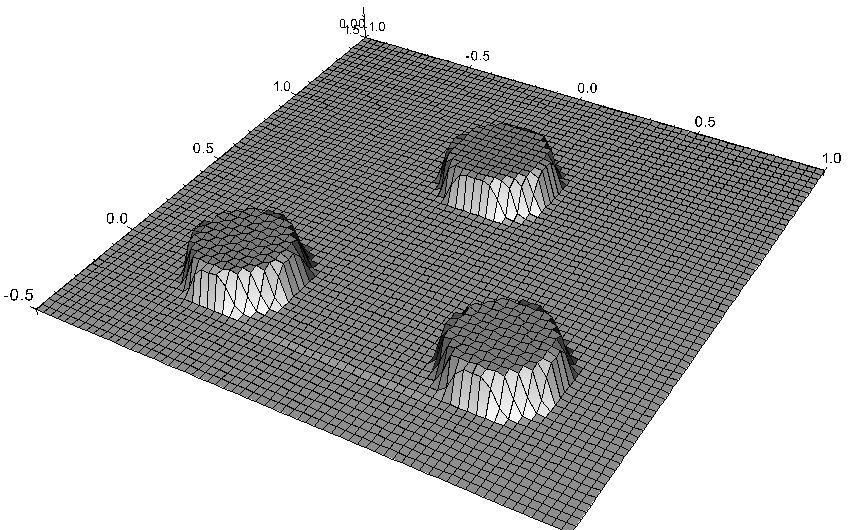}\hfil
  \includegraphics[scale=0.15]{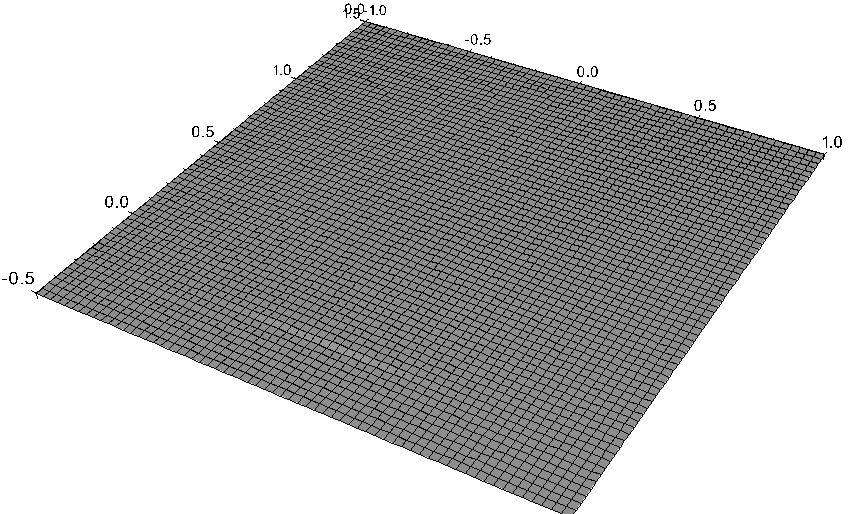}
\end{center}
\caption{Evolution of the characteristic set of three balls $B_i =B(x_i,r_i)$ with centers
$x_i$ at the vertices of an equilateral triangle of unit size and radii $r_i\leq0.2$. The exact
solution is $\ue(x,t) = \sum_{i=1}^3 (1-\lambda_{B_i}t)^+\chi_{B(x_i,r_i)}$.
The mesh is uniform with
size $h=2^{-5}$ and the time-step $\dt = \sqrt2 h/10$. The discrete solution is shown every three
time steps until extinction
and preserves the structure of $\ue$ without numerical diffusion and/or oscillations.}
\label{fig:threeballs}
\end{figure}

Figure~\ref{fig:threeballs} shows the evolution in this case when $h=0.03$. Notice that 
our method provides a good approximation with relatively few nodes and it 
does not introduce spurious oscillations. In addition, we obtain fairly good agreement with the 
extinction time, given by $t_\star = \tfrac1{\lambda_B} = \tfrac{r}2 = \tfrac{1}{10}.$
In contrast to methods that involve regularization \cite{MR1994316,MR2194526},
we observe consistency with the PDE
in that the support of the fully discrete solution remains constant in time. Finally,
the $L^\infty(L^2)$-norm of the error with respect to $h$ is shown in Figure~\ref{fig:threeballserr}.
Again, we see that the error is $\calO(h^{1/2})$.

\begin{figure}[ht]
  \begin{center}
    \includegraphics[scale=0.5]{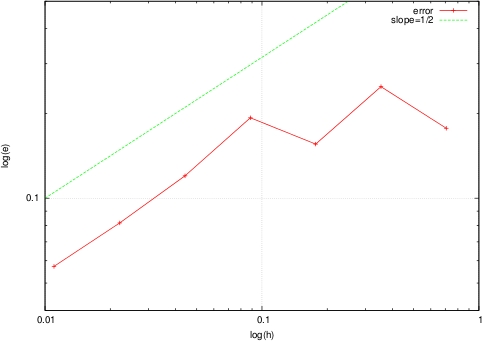}
  \end{center}
  \caption{$L^\infty(L^2)$-errors for the evolution of the characteristic of 
  three circles (see \S\ref{sub:ThreeBalls}). The observed convergence rate
  $\calO(h^{1/2})$ is better that predicted by the theory.}
\label{fig:threeballserr}
\end{figure}

\subsection{The Characteristic of an Annulus}
\label{sub:ring}
Again in the setting of \S\ref{sub:Ball}, the evolution of
$\ue_0 = M\chi_E$, where $E=B(0,R)\setminus \overline{B(0,r)}$ is an annulus and
$M,R,r\in\Real$, $r<R$, is governed by \cite{MR2033382,MR1929886}
\[
  \ue(x,t) = 
  \begin{dcases}
    \signum(M) ( |M| - \lambda_E t )^+ \chi_E + \lambda_{B(0,r)}t\chi_{B(0,r)}, &
      t < T_1, \\
    \signum(m) \left( |m| - \lambda_{B(0,r)}(t-T_1) \right)^+ \chi_{B(0,R)}, &
      t \geq T_1,
  \end{dcases}
\]
with
$T_1 = \tfrac{|M|}{ \lambda_E + \lambda_{B(0,r)} }$ and $m = \lambda_{B(0,r)}T_1$.

\begin{figure}[ht]
\begin{center}
  \includegraphics[scale=0.2]{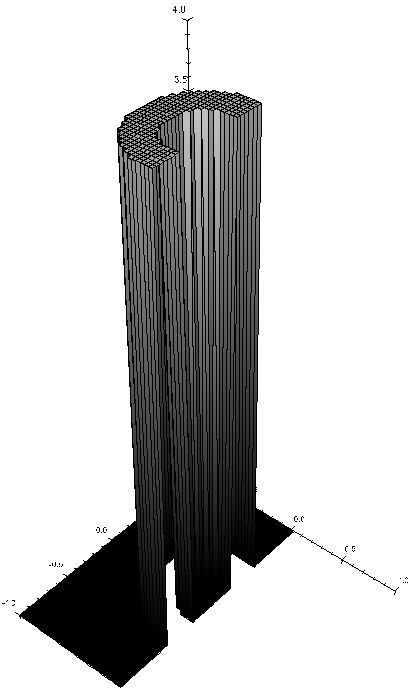}\hfil
  \includegraphics[scale=0.2]{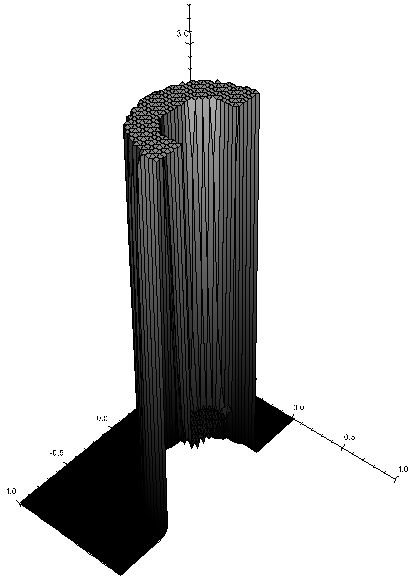}\hfil
  \includegraphics[scale=0.2]{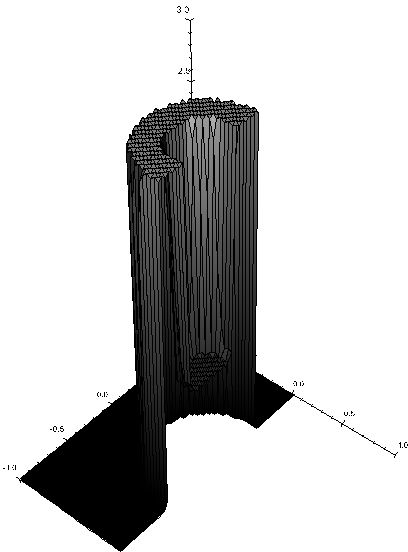}\\
  \includegraphics[scale=0.2]{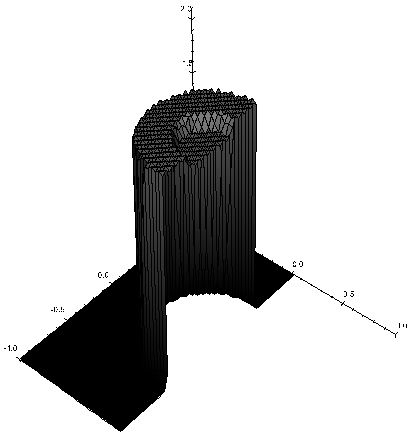}\hfil
  \includegraphics[scale=0.2]{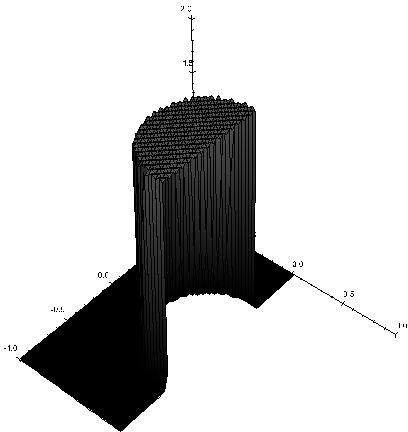}\hfil
  \includegraphics[scale=0.2]{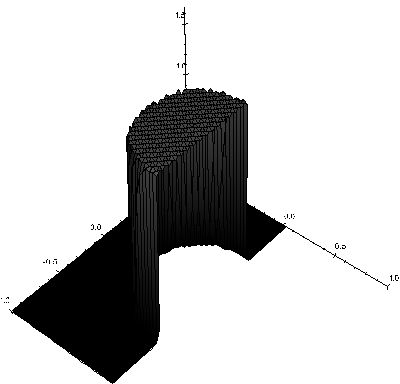}\\
  \includegraphics[scale=0.2]{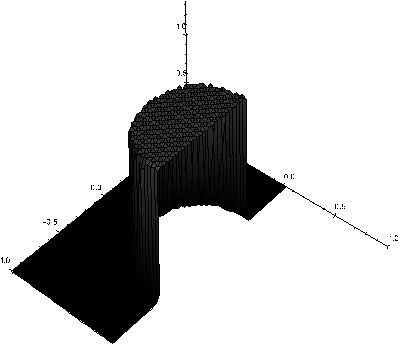}\hfil
  \includegraphics[scale=0.2]{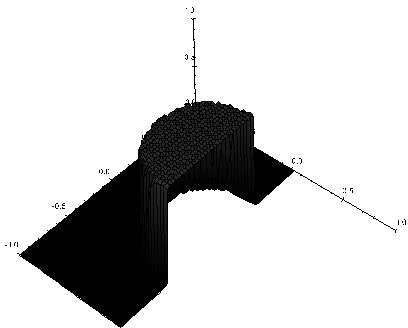}\hfil
  \includegraphics[scale=0.2]{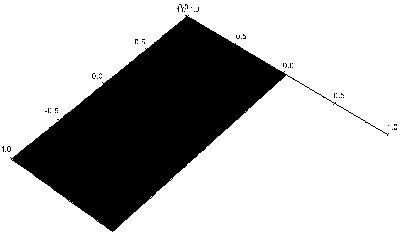}
\end{center}
\caption{Evolution of the characteristic of a ring
$E=B(0,R)\setminus \overline{B(0,r)}$ with $R=1/2$ and $r=1/4$.
The mesh is uniform with size $h=2^{-5}$ and the time-step $\dt = \sqrt2 h /10$.
The discrete solution, shown after 0, 18, 26, 50, 52, 72, 90, 108 and 170 time steps,
reproduces well the form of the exact solution without numerical diffusion. The 
sealed characteristic $\chi_{B(0,R)}$ decreases whereas $\chi_{B(0,r)}$ increases until
their heights merge and $\chi_{B(0,R)}$ continues to decrease.}
\label{fig:ring}
\end{figure}

\begin{figure}[ht]
\begin{center}
  \includegraphics[scale=0.5]{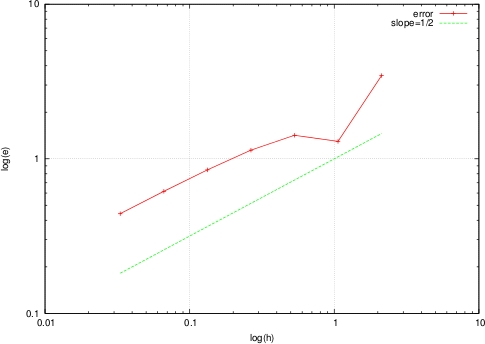}
\end{center}
\caption{$L^\infty(L^2)$-errors for the evolution of the characteristic of a ring
(see \S\ref{sub:ring}). The experimental convergence rate $\calO(h^{1/2})$ is better than
predicted.}
\label{fig:ring_order}
\end{figure}

We set $M=4$, $R=1/2$, $r=1/4$.
Figure~\ref{fig:ring} shows the evolution of the numerical solution for $h=2^{-5}$ and
notice that, in this case, $T_1 = 1/4$, $m = 2$ and the extinction time is
$T_{ext} = 3/4$. Numerically we obtained $T_{1,h} \approx 0.2558$, $m_h \approx 2.045$,
$T_{ext,h} \approx 0.796$; which are in good agreement with the exact
values. In addition, we see that there is no diffusion, as opposed to what methods based
on regularization obtain.
The $L^\infty(L^2)$-error with respect to $h$ is presented in 
Figure~\ref{fig:ring_order}. Again we obtain $\calO(h^{1/2})$.

\subsection{Convergence of the Inner Loop}
\label{sub:ConvInner}

\begin{figure}[ht]
\begin{center}
  \includegraphics[scale=0.5]{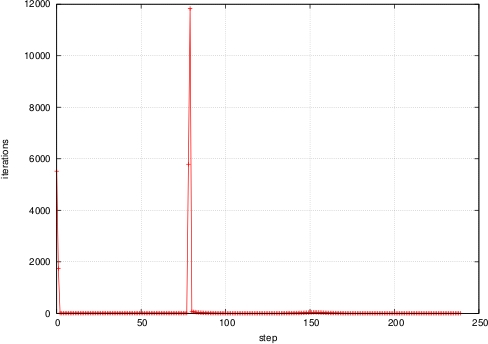}\hfil
  \includegraphics[scale=0.5]{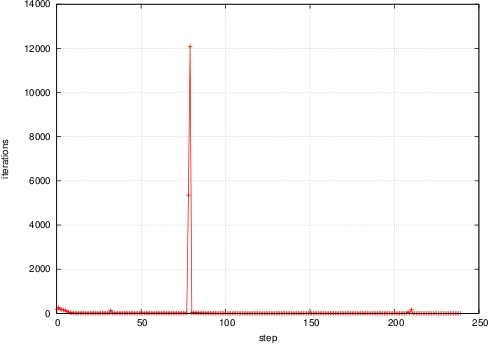}
\end{center}
\caption{Number of iterations.
Left: the initial dual variable is not in the subdifferential of the initial data,
which requires more than $5000$ initial iterations for convergence.
Right: the initial dual variable is in the subdifferential,
which entails fewer initial iterations for convergence (see \S\ref{sub:ConvInner}).}
\label{fig:its}
\end{figure}

The result of Theorem~\ref{thm:convinner} guarantees convergence of the inner iteration at 
every step. However, there is no assessment of the speed of convergence. We numerically
investigate the effect of the choice of initial condition for the inner loop.
Set $\Omega = (-2,2)$ and $\ue_0=\chi_{[-1,1]}$.
We choose
\[
  \bz_0^1 = 0,
  \quad
  \bz_0^2 = \begin{dcases}
              x + 2, & x \in (-2,-1), \\
              -x, & x \in (-1,1), \\
              x -2, & x \in (1,2).
            \end{dcases}
\]
It is possible to show that only $\bz_0^2 \in \partial\Psi(\ue_0)$.
For $h=2^{-6}$, $\dt = \tau = h/10$ and $\sigma=1$, Figure~\ref{fig:its} plots the number of
iterations as a function of the step for $\bz_0^1$ and $\bz_0^2$.
The number of initial iteration is much higher (more than $5000$) for $\bz_0^1$ than for
$\bz_0^2$. Moreover,
notice the spike on both graphs at 79 steps. This is due to the fact that, at this step, extinction
occurs. From this we conclude that
the number of iterations heavily depends on the initial choice of $\bz_0$ or, more generally,
on $\blambda_h^0$. Strategies for choosing it need further investigation.

\subsection{Comparison with Regularized Flow}
\label{sub:compreg}
To conclude our discussion, it is imperative to make a comparison between our method and those
that involve regularization and show that the advantages in our approach are numerous. 
Before embarking in such an endeavor, let us recall some properties of regularized flows. 
The analysis developed in \cite{MR2194526} does not provide a clear understanding
of the relation between the regularization $\epsilon$ and the space discretization $h$. By means
of numerical experiments the authors conclude that $\epsilon = \calO(h^2)$ is the optimal scaling 
law, see \cite[Figures 4--5]{MR2194526}.

Let us, with the help of the theory developed in \S\ref{sub:EulerandFEM}, try to bring some light
into this matter. If we denote by $\ue_\epsilon$ the solution to \eqref{eq:tvflowstrong} with 
regularization
$\Psi_\epsilon(w)=\int_\Omega \sqrt{\epsilon^2 + |\GRAD w|^2}$ given by
\eqref{eq:regularization}, \cite[Theorem 2]{MR2194526} shows that
\begin{equation}
\label{eq:uMueps}
  \| \ue - \ue_\epsilon \|_{L^\infty(L^2)} \leq c \epsilon^{1/2}.
\end{equation}
If $u_{\epsilon,h}^\dt \subset \polV_h$ is the solution to a fully discrete approximation 
of the regularized flow with $\Psi_{\epsilon,h} =\Psi_\epsilon$,
under the assumption that $\ue_\epsilon \in L^\infty(0,T;\Hdeux)$ we have
\begin{equation}
\label{eq:uepsMuepshdt}
  \| \ue_\epsilon - \widehat{u}_{\epsilon,h}^\dt \|_{L^\infty(L^2)}
  \leq c \left (\dt^{1/2} + h^{1/2}\epsilon^{-\alpha/2} \right).
\end{equation}
To see this it suffices to realize that \eqref{eq:discenergymonotone} is trivially satisfied and,
setting $\calC_h = \Pi_h$ we conclude
that \eqref{eq:interpolationabs} amounts to
\[
  \| \calI_h w - w \|_{L^2} \leq c h^2 \| w \|_{H^2}
  \quad \Longrightarrow \quad
  \vare_1(h) = ch^2 \| w \|_{H^2},
\]
and, since
\[
  \int_\Omega \left| \sqrt{ \epsilon^2 + |\GRAD w_1 |^2 } - \sqrt{ \epsilon^2 + |\GRAD w_2 |^2 } \right|
  \leq
  \int_\Omega \left| \GRAD (w_1 - w_2) \right|,
\]
we have, for \eqref{eq:interpolationabstracr},
\[
  \vare_2(h) \leq c h \| w \|_{W^2_1}.
\]
Finally, \cite[Theorem~1.2]{MR1994316} shows that $\| \ue_\epsilon \|_{L^2(H^2)} \leq \epsilon^{-\alpha}$
for some $\alpha \in \polN_0$. Combining \eqref{eq:uMueps} and \eqref{eq:uepsMuepshdt} 
with Theorem~\ref{thm:aprioribetter}
we conclude
\begin{equation}
\label{eq:uMuepshdt}
  \| \ue - \widehat{u}_{\epsilon,h}^\dt \|_{L^\infty(L^2)}
  \leq c \left( \dt^{1/2} + \epsilon^{1/2} + h\epsilon^{-\alpha/2} + h^{1/2}\right),
\end{equation}
which yields the optimal scaling $\epsilon = \calO( h^{\frac{2}{1+\alpha}})$.

The last ingredient we need to make the comparison is to
recall that (\cf \cite[Remark 5.3]{MR1301176}) in the one dimensional case, \ie $d=1$,
if the initial data is monotone, it is itself a minimizer of the total variation energy, and so
the flow fixes it. In other words, if $\ue_0$ is monotone, then $\ue(t)=\ue_0$ for all $t>0$.

\begin{figure}[ht]
\begin{center}
  \includegraphics[scale=0.5]{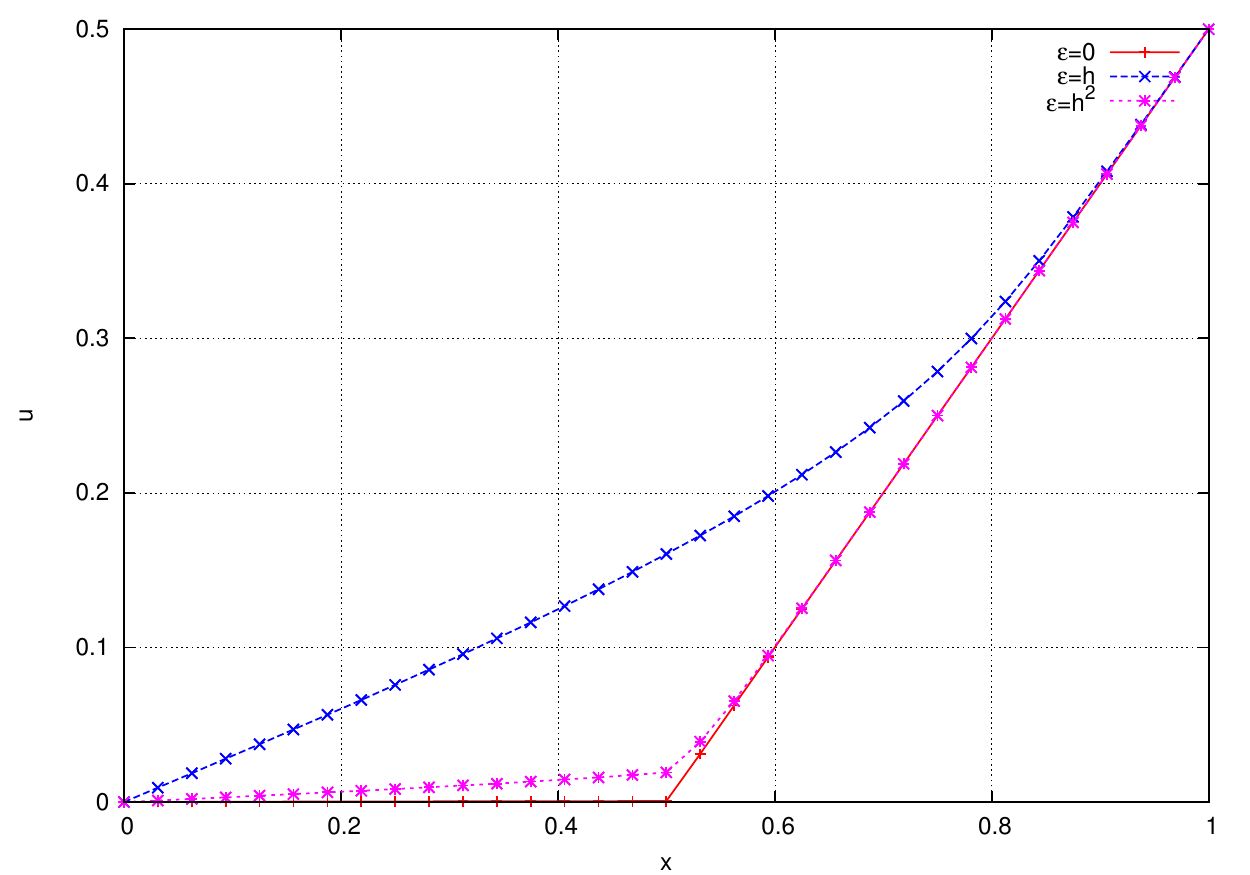}
\end{center}
\caption{Solution, at $T=5$, of the TV flow with monotone initial data.
Our method (red with $+$), regularized flow with $\epsilon = h$ (blue with $\times$) and 
with $\epsilon=h^2$ (magenta with $*$).
$h=2^{-5}$, $\dt = 2^{-10}$. The unregularized solution coincides with
the exact solution, whereas the regularized flow misses it, although it belongs to the 
discrete space.}
\label{fig:compreg}
\end{figure}

Consider, in $\Omega = (0,1)$, the initial data
\[
  \ue_0(x) = \begin{dcases}
               0, & x \leq \tfrac12, \\
               x - \tfrac12, & \tfrac12 < x \leq 1.
             \end{dcases}
\]
According to the discussion presented above, the solution to \eqref{eq:subTVflow}
is $\ue(t) = \ue_0$. Figure~\ref{fig:compreg} shows the solution, at $T=5$, obtained with our 
method and the regularized flow with $\epsilon = \calO(h^2)$ and $\epsilon=\calO(h)$. The first
choice is the one advocated in \cite{MR2194526}; while the second is the optimal according to
\eqref{eq:uMuepshdt} 
provided $\alpha=1$, which is consistent with Proposition~\ref{prop:approxbv}. In such a case,
\eqref{eq:uMuepshdt} gives the following error estimate
\[
  \| \ue - \widehat{u}_{\epsilon,h}^\dt \|_{L^\infty(L^2)} \leq c h^{1/2},
\]
provided $\epsilon \approx h \approx \dt$.
The mesh size is $h = 2^{-5}$ and the time-step $\dt = 2^{-10}$. Notice that the requirement
$\dt = \calO(h^2)$, which is needed for $L^2$-convergence in the regularized flow
(\cf \cite[Theorem~4]{MR2194526} and \cite[Theorem~1.7]{MR1994316}), is satisfied.

The advantages of our method are now evident. We do not impose any restriction on the time-step,
as opposed to the $\dt = \calO(h^2)$ that is necessary in regularized methods to guarantee convergence in
$\Ldeux$. Even if one is willing to settle for convergence in $L^p(\Omega)$, with $p<d/(d-1)$, the methods
with regularization require that the solution of the regularized flow is in
$L^\infty(0,T;W^1_1(\Omega)) \cap L^\infty(0,T;H^1_{\text{loc}}(\Omega))$. If an error estimate is desired,
one must impose that $\ue_0 \in \calC^2(\bar\Omega)$, and even in that case it is not clear
what is the relation between $h$, $\dt$ and $\epsilon$. In addition to these approximation issues,
the regularized flow requires the solution, at each time step, of a nonlinear system and no convergence
analysis is provided. In contrast, we have developed and analyzed an inexact 
iterative scheme for the solution
of our problems at each time step, and we have showed its global convergence. To finalize, the result
presented in Figure~\ref{fig:compreg} shows that the regularized flow misses certain fundamental features
of the problem, even in simple cases.

\section{Total Variation Minimization}
\label{sec:TVmin}
We conclude with yet another application of our result on approximation of functions 
of bounded variation (Proposition~\ref{prop:approxbv}): we improve on the existing results about
total variation minimization. Let $g\in L^\infty(\Omega)$ and $\alpha>0$. Consider
$\Xi(w) = \Psi(w) + \tfrac\alpha2 \| w - g \|_{L^2}^2$.
Thanks to the fact that this functional is strictly convex there exists a unique
 $\xi \in \BV\cap\Ldeux$ such that
\begin{equation}
  \Xi(\xi) = \inf\left\{ \Xi(w): w \in \Ldeux \right\} < \infty.
\label{eq:defofustar}
\end{equation}
Here we are interested in the approximation of $\xi$ by elements of $\polV_h$.
Since $\polV_h$ is finite dimensional, there is a unique $\xi_h \in \polV_h$ such that
\begin{equation}
  \Xi(\xi_h) = \inf\left\{ \Xi(w_h): w \in \polV_h \right\}  < \infty.
\label{eq:defofuhstar}
\end{equation}
The main approximation properties of $\xi_h$ are detailed in the following.

\begin{theorem}[Convergence of discrete minimizers]
\label{thm:convminimzer}
Assume that $\Omega$ is star shaped with respect to a point. Let
$\xi \in \BV\cap L^\infty(\Omega)$ and $\xi_h \in \polV_h$ be defined as in \eqref{eq:defofustar} and
\eqref{eq:defofuhstar}, respectively. Then
\[
  \| \xi - \xi_h \|_{L^2} \leq c h^{1/4}.
\]
\end{theorem}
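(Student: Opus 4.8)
The plan is to combine the strong convexity of $\Xi$ with the minimality of $\xi_h$ over $\polV_h$ and the approximation machinery of Proposition~\ref{prop:approxbv}, assembled in the Cl\'ement-based operator $\calC_h$ of \eqref{eq:defofCh}. First I would record the strong convexity estimate. Since $\Psi$ is convex and $\frac\alpha2\|\cdot-g\|_{L^2}^2$ is $\alpha$-strongly convex, $\Xi$ is $\alpha$-strongly convex; as $\xi$ minimizes $\Xi$ over all of $\Ldeux$, the optimality condition $\alpha(g-\xi)\in\partial\Psi(\xi)$ yields, after expanding the quadratic term,
\[
  \frac\alpha2 \|w - \xi\|_{L^2}^2 \leq \Xi(w) - \Xi(\xi), \quad \forall w \in \Ldeux.
\]
Applying this with $w = \xi_h \in \polV_h \subset \Ldeux$ and invoking \eqref{eq:defofuhstar}, which gives $\Xi(\xi_h)\leq\Xi(w_h)$ for every $w_h\in\polV_h$, reduces everything to estimating $\Xi(w_h)-\Xi(\xi)$ for a well chosen $w_h$. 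The natural candidate is $w_h = \calC_h\xi = \Pi_h\xi_\epsilon$, with $\epsilon$ fixed at the end.

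Second, I would expand
\[
  \Xi(w_h) - \Xi(\xi) = \big[\Psi(w_h) - \Psi(\xi)\big] + \frac\alpha2\|w_h - \xi\|_{L^2}^2 + \alpha\scl w_h - \xi, \xi - g\scr,
\]
and bound the three pieces separately. The energy difference is controlled exactly as in the proof of Corollary~\ref{cor:aprioribetter} (terms $II$ and $III$ there, with the quadrature term $I$ now absent because $\Xi$ uses the exact energy $\Psi$ rather than $\Psi_h$), giving $\Psi(w_h)-\Psi(\xi)\leq c(h/\epsilon+\epsilon)|\De\xi|(\Omega)$. The quadratic term is handled by the $L^2$ bound $\|\Pi_h\xi_\epsilon-\xi\|_{L^2}^2\leq c\|\xi\|_{L^\infty}(h^2/\epsilon+\epsilon)|\De\xi|(\Omega)$ established while verifying \eqref{eq:interpolationabs} in Corollary~\ref{cor:aprioribetter}.

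Third, and this is the step that needs care, I would estimate the cross term $\alpha\scl w_h-\xi,\xi-g\scr$. A crude Cauchy--Schwarz bound in $L^2$ would only give $\calO(h^{1/4})$, which is insufficient and degrades the final rate to $h^{1/8}$. Instead, since both $\xi$ and $g$ lie in $L^\infty(\Omega)$, I would measure the cross term in the pairing $L^\infty\times L^1$:
\[
  |\scl w_h - \xi, \xi - g\scr| \leq \|\xi - g\|_{L^\infty}\,\|\Pi_h\xi_\epsilon - \xi\|_{L^1} \leq c\left(\frac{h^2}\epsilon + \epsilon\right)|\De\xi|(\Omega),
\]
using the $L^1$ approximation estimate from Corollary~\ref{cor:aprioribetter}. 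Trading the $L^2$ norm for the $L^1$ norm of the interpolation error is precisely what preserves the $h^{1/4}$ rate, so I expect this to be the main obstacle.

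Finally, collecting the three bounds gives $\Xi(w_h)-\Xi(\xi)\leq c(h/\epsilon+\epsilon)$, since the $h^2/\epsilon$ contributions are dominated by $h/\epsilon$ for $h\leq 1$. Choosing $\epsilon=h^{1/2}$ balances the remaining two terms and yields $\Xi(w_h)-\Xi(\xi)\leq c\,h^{1/2}$; combined with the strong convexity estimate this produces $\tfrac\alpha2\|\xi-\xi_h\|_{L^2}^2\leq c\,h^{1/2}$, whence $\|\xi-\xi_h\|_{L^2}\leq c\,h^{1/4}$, as claimed.
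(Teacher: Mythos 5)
Your proposal is correct and follows essentially the same route as the paper: strong convexity of $\Xi$ plus discrete minimality reduces the problem to bounding $\Xi(\Pi_h\xi_\epsilon)-\Xi(\xi)$, the energy part is handled via Proposition~\ref{prop:approxbv} and Cl\'ement interpolation, the fidelity part via the $L^1$--$L^\infty$ pairing (exploiting $\xi,g\in L^\infty$), and $\epsilon=h^{1/2}$ balances the terms. The only difference is cosmetic: the paper treats the fidelity term as a single difference of squares $\scl w_h-\xi,\,(w_h-g)+(\xi-g)\scr$ rather than splitting it into a quadratic term plus a cross term, but both are the same identity estimated with the same tools.
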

\begin{proof}
We adapt the ideas presented in \cite[Theorem 3.1]{BartelsTV}.
Let $\epsilon>0$ and $\xi_\epsilon \in \calC^\infty(\Omega)$ be an approximation of
$\xi$ that satisfies all the properties stated in Proposition~\ref{prop:approxbv}.
Owing to the strict convexity of $\Xi$ and the fact that $\xi_h$ is a discrete minimizer, we have
\begin{align*}
  \frac\alpha2 \| \xi - \xi_h \|_{L^2}^2 &\leq \Xi( \xi_h) - \Xi( \xi )
      \leq \Xi(\Pi_h \xi_\epsilon) - \Xi(\xi) \\
      &= \left( \| \GRAD \Pi_h \xi_\epsilon \|_{L^1} - | \De \xi |(\Omega) \right)
      + \frac\alpha2 \left( \| \Pi_h \xi_\epsilon - g \|_{L^2}^2 - \| \xi - g \|_{L^2}^2 \right)
      = \calA_1 + \calA_2,
\end{align*}
where $\Pi_h$ is the Cl\'ement interpolation operator \cite{MR0400739}.
Let us look at each one of the terms in this last inequality:
\begin{enumerate}[$\calA_1$:]
  \item We add and subtract the $W^1_1$-seminorm of $\xi_\epsilon$ and use its approximation properties
  with respect to total variation along with the approximation properties of $\Pi_h$ described in
  \eqref{eq:lagrange}:
  \[
      \calA_1 \leq \| \GRAD (\Pi_h \xi_\epsilon - \xi_\epsilon ) \|_{L^1} 
      + \| \GRAD \xi_\epsilon \|_{L^1} - |\De \xi|(\Omega)
      \leq c \left( \frac{h}\epsilon + \epsilon \right) | \De \xi|(\Omega).
  \]
  \item We, again, use the approximation properties of $\xi_\epsilon$ and the fact that
  $\xi$ and $g$ are essentially bounded, say by a constant $c>0$,
  \[
    \calA_2 \leq \frac{c\alpha}2 \left( \| \calI_h \xi_\epsilon - \xi_\epsilon \|_{L^1}
      + \| \xi_\epsilon - \xi \|_{L^1} \right)
      \leq c \left( \frac{h^2}{\epsilon} + \epsilon \right) | \De \xi |(\Omega).
  \]
\end{enumerate}
Setting $\epsilon = h^\frac12$ we obtain the result.
\end{proof}

\begin{remark}[Convergence of total variation minimization]
\label{rem:convergenceBesov}
Theorem~\ref{thm:convminimzer} is, in a sense, an improvement over the original result of \cite{BartelsTV},
at least for star shaped domains, and under the boundedness assumptions on 
$g$ and $\xi$. If $\xi \in B^s_\infty(\Ldeux)$ for some $s \in (0,1]$, and 
relying on the results of \cite{MR2792398}, \cite[Theorem~3.1]{BartelsTV} proves the estimate
\[
  \| \xi - \xi_h \|_{L^2} \leq c h^{\frac{s}{2(1+s)}},
\]
so that the best possible rate of convergence is $\calO(h^{1/4})$, which is what we
obtain, but with lower regularity. To understand this regularity assumption
it suffices to recall that
$\BV\cap L^\infty(\Omega) \hookrightarrow B^s_\infty(L^2(\Omega))$ for $s\leq \tfrac12$
(see \cite[Lemma~38.1]{MR2328004} for a proof and, in some sense, the converse inclusion).
The key step that allowed us to reduce the smoothness assumption is
Proposition~\ref{prop:approxbv}. In addition, the proof of Theorem~\ref{thm:convminimzer}
shows that if we had a TV-diminishing interpolant, we would obtain 
\[
  \| \xi - \xi_h \|_{L^2} \leq c h^{1/2},
\]
which is an optimal error estimate for $\xi \in B^s_\infty(\Ldeux)$.
Such a construction is presented in \cite{TVDInterpolation}.
\end{remark}
 
\bibliographystyle{siam}
\bibliography{biblio}

\end{document}